\newtheorem{theorem}{Theorem}[section]
\newtheorem{lemma}[theorem]{Lemma}
\theoremstyle{remark}
\newtheorem{definition}[theorem]{Definition}
\newtheorem{remark}[theorem]{Remark}
\newtheorem{example}[theorem]{Example}
\newcommand\eL{{\mathcal L}}
\newcommand\M{{\mathcal M}}
\newcommand\R{{\mathcal R}}
\newcommand\Z{{\mathbb Z}}
\newcommand\xt{{(x_0,x_1,\ldots,x_n,x_{n+1})}}
\begin{document}

\title{Homology of ternary algebras yielding invariants of knots and knotted surfaces}
\date{January 6, 2017}
\author{Maciej Niebrzydowski}
\address[Maciej Niebrzydowski]{Institute of Mathematics\\ 
Faculty of Mathematics, Physics and Informatics\\
University of Gda{\'n}sk, 80-308 Gda{\'n}sk, Poland}
\email{mniebrz@gmail.com}

\keywords{ternary quasigroup, homology, Reidemeister moves, Roseman moves, Yoshikawa moves, cocycle invariant, degenerate subcomplex, link on a surface, knotted surface}
\subjclass[2000]{Primary: 57M27; Secondary:  55N35, 57Q45}

\thispagestyle{empty}

\begin{abstract}
We define homology of ternary algebras satisfying axioms derived from particle scattering or, equivalently, from the third Reidemeister move. We show that ternary quasigroups satisfying these axioms appear naturally in invariants of Reidemeister, Yoshikawa, and Roseman moves. Our homology has a degenerate subcomplex. The normalized homology yields invariants of knots and knotted surfaces.
\end{abstract}

\maketitle

\section{Introduction}
The fundamental group of the complement of a knot $K$ in $\mathbb{R}^3$ is often considered either via the Wirtinger relations (of the form $x_i=x_k^{-1}x_jx_k$) or with the Dehn presentation. The binary operations in structures called racks and quandles (\cite{FR92,Joy,Mat82}), and their usefulness in knot theory,  arise from generalizing the conjugation in the Wirtinger relations.
Rack and quandle (co)homology has been closely studied in recent years, see for example \cite{FRS04,CJKLS03,CES04,PR14}. The cocycle invariants obtained from such (co)homology theories proved to be very useful. Some problems to which they were applied are:
the tangle embedding problem \cite{AERS08},
showing non-invertibility of knotted surfaces \cite{CJKLS03,AS05}, calculating the minimal number of triple points in knotted surface projections \cite{SS04}, and finding the
minimal number of broken sheets in knotted surface diagrams \cite{SaS05}.

In this paper we focus on (co)homology of structures obtained from generalizing the relations in the Dehn presentation of the knot group. The relations are of the form $d=ab^{-1}c$ and can be viewed as $d=abcT$, where $abcT=ab^{-1}c$ is a ternary operation that appears quite often in the universal-algebraic literature, e.g. in \cite{Cer43}. This point of view leads us to ternary quasigroups satisfying two axioms obtained from the third Reidemeister move (Definition \ref{KTQ}). We call such a structure a knot-theoretic ternary
quasigroup (abbreviated to KTQ). Our generalization has two stages.
First, we use unoriented diagrams (remembering that Dehn presentation does not require a diagram to be oriented), and then we consider more general algebras involving orientation. The number of KTQ-colorings
of a knot diagram (resp. Yoshikawa diagram or a knotted surface diagram in $\mathbb{R}^3$) does not change under Reidemeister moves (resp. Yoshikawa or Roseman moves), thus it is a property of an isotopy class of a knot (resp. knotted surface) and not just diagram (Section 3).

The main part of the paper presents a homology for algebras $(X,T)$ satisfying the nesting axioms A1 and A2 derived from the third Reidemeister move, and the corresponding degenerate subcomplex. We define the homology of KTQs as the normalized homology $H^N(X,T)$. We show how to assign a cycle in this homology to a KTQ-colored diagram of a knot (resp. knotted surface), so that its homology class is a knot (resp. knotted surface) invariant. 

$H^N_*(X,T)$ very often has a torsion part. Our calculations (for homology with $\mathbb{Z}$ coefficients) with GAP \cite{GAP4} indicate that, up to isomorphism, there are 2 two-element KTQs, of which one has a torsion part $\mathbb{Z}_2$ in $H^N_0(X,T)$ and $H^N_2(X,T)$. There are 7 KTQs with three elements, and out of them five have torsion (either  
$\mathbb{Z}_3$ or $\mathbb{Z}_3^2$) in $H^N_2(X,T)$. There are 37 non-isomorphic four-element KTQs,
of which only three have no torsion part in $H^N_2(X,T)$, and for the rest the possibilities are: $\mathbb{Z}_2$, $\mathbb{Z}_2^2$, $\mathbb{Z}_2^3$, $\mathbb{Z}_2^4$, $\mathbb{Z}_2\oplus \mathbb{Z}_4^2$, $\mathbb{Z}_2^2\oplus \mathbb{Z}_4^2$, $\mathbb{Z}_2^3\oplus \mathbb{Z}_4$, and $\mathbb{Z}_2^3\oplus \mathbb{Z}_4^2$. There are 23 KTQs with five elements; five of them have
torsion part (either $\mathbb{Z}_5$ or $\mathbb{Z}_5^2$) in $H^N_2(X,T)$. Finally, all 193 non-isomorphic six-element KTQs that we were able to generate so far with GAP have nontrivial torsion part in the second homology $H^N_2(X,T)$, and only 14 have no torsion in $H^N_1(X,T)$. Note that $H^N_1(X,T)$ is used for invariants of 1-knots, and $H^N_2(X,T)$ is for 2-knots.

Some connections (requiring certain assumptions) between arc colorings and region colorings are considered in \cite{Hi94}. We will present an example of a knot diagram on a torus for which the fundamental (shadow) quandle and cocycle invariants do not work, but KTQs with the associated cohomological invariants can be applied.

The rest of the paper is organized as follows. In Section 2 we construct the homology for structures satisfying the axioms A1 and A2, and the degenerate subcomplex. In Section 3 we review the definition of a ternary quasigroup and  explain how KTQs can be used in invariants of knots and knotted surfaces. Finally, in Section 4 we construct (co)homological invariants and show some examples of computations.

\section{Homology} 

As in \cite{Hi94}, we begin with some motivation from physics.
We consider three particles moving with different velocities in one-dimensional ambient space. They divide it into parts and the state of the vacuum can be different in them (see Fig. \ref{particles}). When two particles approach each other, they scatter and recede from each other preserving momenta, but the state of the vacuum between them can change, and we will assume that the new state is described as $abcT$, where $T\colon X\times X\times X\to X$ is a ternary operation on the set $X$ of states, and $a$, $b$, and $c$ are the states before scattering, taken in a cyclic clockwise order as in Fig. \ref{scattering}. With three particles, there will be exactly three pairwise scatterings, but their order depends on the initial position of the particles. It is a natural assumption that the states of the vacua after all pairwise scatterings should not depend on this order, and thus two axioms are obtained:
\begin{equation}
\forall_{a,b,c,d\in X} \quad (abcT)cdT=[ab(bcdT)T](bcdT)dT, \tag{A1} 
\end{equation}
\begin{equation}
\forall_{a,b,c,d\in X} \quad ab(bcdT)T=a(abcT)[(abcT)cdT]T.  \tag{A2}
\end{equation}

Note that the right side of A1 (resp. A2) is obtained from the left side of A1 (resp. A2) by substitution $c\mapsto bcdT$ (resp. $b\mapsto abcT$ ).

\begin{figure}
\begin{center}
\includegraphics[height=0.8 cm]{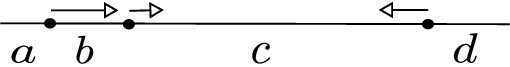}
\caption{Particles moving in one-dimensional ambient space.}\label{particles}
\end{center}
\end{figure}

\begin{figure}
\begin{center}
\includegraphics[height=6 cm]{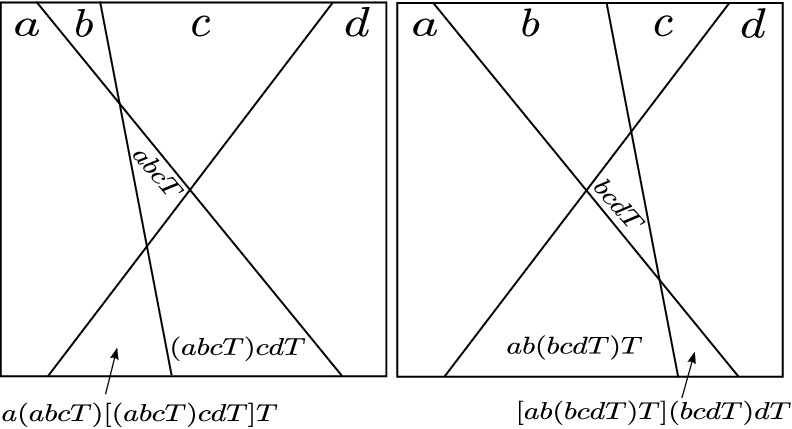}
\caption{Particle scatterings and the states of the vacua.}\label{scattering}
\end{center}
\end{figure}

\begin{example}
Let $(G,\cdot)$ be a group. Consider a generalization of conjugation:
\[
abcS=a^{-1}\cdot b\cdot c,\ abc\overline{S}=a\cdot b\cdot c^{-1}.
\]
Then $S$ satisfies A2 but not A1, and $\overline{S}$ satisfies A1 but not A2.
\end{example}
\begin{example}
Let $(G,\cdot)$ be a group. The operation from the Dehn presentation
\[
abcT=a\cdot b^{-1}\cdot c
\]
satisfies both A1 and A2.
\end{example}

Now we will define homology theory for algebras $(X,T)$ satisfying A1 and A2. 

\begin{definition} \label{maindefs}
Let $\mathrm{R}$ be a commutative unital ring, and
$(X,T)$ denote a ternary algebra satisfying axioms A1 and A2.
Let $C_n(X)=\mathrm{R}\langle X^{n+2}\rangle$ be the $\mathrm{R}$-module generated freely by $(n+2)$-tuples 
$(x_0,x_1,\ldots, x_n,x_{n+1})$ of elements of $X$. We define
\[
\partial_n^L\xt=\sum_{i=0}^n(-1)^i d_i^{n,L}\xt,
\]
where $d_i^{n,L}\colon C_n\to C_{n-1}$ is obtained inductively by
\begin{align*}
d_0^{n,L}\xt & =(x_1,\ldots,x_{n+1}),\\
d_{i}^{n,L}\xt & = d_{i-1}^{n,L}(x_0,\ldots,x_{i-1},x_{i-1}x_ix_{i+1}T,x_{i+1},\ldots,x_{n+1})
\end{align*}
for $i\in\{1,\ldots,n\}$. Similarly,
\[
\partial_n^R\xt=\sum_{i=0}^n(-1)^i d_i^{n,R}\xt,
\]
where the inductive formula for $d_i^{n,R}\colon C_n\to C_{n-1}$ is as follows:
\begin{align*}
d_n^{n,R}\xt & =(x_0,\ldots,x_{n}),\\
d_{i-1}^{n,R}\xt & =d_{i}^{n,R}(x_0,\ldots,x_{i-1},x_{i-1}x_ix_{i+1}T,x_{i+1},\ldots,x_{n+1})
\end{align*}
for $i\in\{1,\ldots,n\}$. 

We can describe $d_i^{n,L}$ and $d_i^{n,R}$ in a different way, defining their coordinates inductively. 
$d_i^{n,L}=(d_{i,1}^{n,L},\ldots,d_{i,k}^{n,L},\ldots,d_{i,n+1}^{n,L})$
is calculated from right to left. For $i\in\{0,\ldots,n\}$ and $k\in\{1,\ldots,n+1\}$,
\begin{equation}\label{leftcoord}
d_{i,k}^{n,L}x = \left\{
\begin{array}{rl}
x_{k-1}x_k(d_{i,k+1}^{n,L}x)T & \text{if } k\leq  i\\
x_k & \text{if } k > i.
\end{array} \right.
\end{equation}
$d_i^{n,R}=(d_{i,0}^{n,R},\ldots,d_{i,k}^{n,R},\ldots,d_{i,n}^{n,R})$ 
is calculated from left to right. For $i\in\{0,\ldots,n\}$ and $k\in\{0,\ldots,n\}$,
\begin{equation}\label{rightcoord}
d_{i,k}^{n,R}x = \left\{
\begin{array}{rl}
(d_{i,k-1}^{n,R}x)x_kx_{k+1}T & \text{if } k > i\\
x_k & \text{if } k \leq i.
\end{array} \right.
\end{equation}
\end{definition}

\begin{theorem} \label{mainhom}
The $\mathrm{R}$-modules $C_n$ endowed with maps
\[
\partial_n^{(\alpha,\beta)}=\alpha\partial_n^L
+\beta\partial_n^R
\]
form a chain complex for any $\alpha$, $\beta\in \mathrm{R}$.
\end{theorem}
\begin{proof}
We need to show that
\[
d_i^{n-1,\epsilon} d_j^{n,\delta}=d_{j-1}^{n-1,\delta} d_i^{n,\epsilon}
\]
for all $i<j$ and $\epsilon$, $\delta\in\{L,R\}$. It will be done in a series of lemmas below.
\end{proof}

Let $x$ denote $\xt$.

\begin{lemma} \label{firstmod}
$d_i^{n-1,L}d_j^{n,L}x=d_{j-1}^{n-1,L}d_i^{n,L}x$
for $0\leq i<j\leq n$.
\end{lemma}
\begin{proof}
The proof is by induction over $j-i$.
First, let $j-i=1$, so we need 
\begin{equation}
d_i^{n-1,L}d_{i+1}^{n,L}x=d_i^{n-1,L}d_i^{n,L}x. \label{ij1}
\end{equation}
For $i=0$, the validity of equation (\ref{ij1}) is immediate ($d_0^{*,L}$ just removes the first input), so assume that $i>0$.
Until the end of this proof, we denote $(d_{i+1,1}^{n,L}x,\ldots,d_{i+1,k}^{n,L}x,
\ldots,d_{i+1,n+1}^{n,L}x)$ by $(i+1)$, and
$(d_{i,1}^{n,L}x,\ldots,d_{i,k}^{n,L}x,
\ldots,d_{i,n+1}^{n,L}x)$ by $(i)$.
Then
\begin{align*}
& d_i^{n-1,L}d_{i+1}^{n,L}x=
d_i^{n-1,L}(d_{i+1,1}^{n,L}x,\ldots,d_{i+1,k}^{n,L}x,
\ldots,d_{i+1,n+1}^{n,L}x)=d_i^{n-1,L}(i+1)
\\ & =(d_{i,1}^{n-1,L}(i+1),\ldots,d_{i,k}^{n-1,L}(i+1),\ldots,d_{i,n}^{n-1,L}(i+1)),\\
& d_i^{n-1,L}d_i^{n,L}x=d_i^{n-1,L}(d_{i,1}^{n,L}x,\ldots,d_{i,k}^{n,L}x,\ldots,
d_{i,n+1}^{n,L}x)=d_i^{n-1,L}(i)
\\ & =(d_{i,1}^{n-1,L}(i),\ldots,d_{i,k}^{n-1,L}(i),\ldots,d_{i,n}^{n-1,L}(i)).
\end{align*}
We see that for $k\geq i+1$,
\[
d_{i,k}^{n-1,L}(i+1)=d_{i+1,k+1}^{n,L}x=x_{k+1}=d_{i,k+1}^{n,L}x=d_{i,k}^{n-1,L}(i).
\]
For $k=i$,
\[
d_{i,k}^{n-1,L}(i)=(d_{i,i}^{n,L}x)(d_{i,i+1}^{n,L}x)(d_{i,i+1}^{n-1,L}(i))T=
(x_{i-1}x_ix_{i+1}T)x_{i+1}x_{i+2}T
\]
and
\begin{align*}
d_{i,k}^{n-1,L}(i+1) & =(d_{i+1,i}^{n,L}x)(d_{i+1,i+1}^{n,L}x)(d_{i,i+1}^{n-1,L}(i+1))T\\
& =[x_{i-1}x_i(x_ix_{i+1}x_{i+2}T)T](x_ix_{i+1}x_{i+2}T)x_{i+2}T.
\end{align*}
Thus, the equality of $d_{i,i}^{n-1,L}(i)$ and $d_{i,i}^{n-1,L}(i+1)$
is exactly the application of the axiom A1. To prove the equalities of the remaining coordinates, we use the axiom A2. Note that 
\[
d_{i+1,i+1}^{n,L}x=x_ix_{i+1}x_{i+2}T=x_ix_{i+1}(d_{i,i+1}^{n-1,L}(i))T.
\]
In general, for $k\leq i+1$, there is a relation
\[
d_{i+1,k}^{n,L}x=x_{k-1}(d_{i,k}^{n,L}x)(d_{i,k}^{n-1,L}(i))T.
\]
We prove it by induction, using A2:
\begin{align*}
& d_{i+1,k-1}^{n,L}x=x_{k-2}x_{k-1}(d_{i+1,k}^{n,L}x)T=x_{k-2}x_{k-1}[x_{k-1}(d_{i,k}^{n,L}x)(d_{i,k}^{n-1,L}(i))T]T\\
& =x_{k-2}(x_{k-2}x_{k-1}(d_{i,k}^{n,L}x)T)[(x_{k-2}x_{k-1}(d_{i,k}^{n,L}x)T)
(d_{i,k}^{n,L}x)(d_{i,k}^{n-1,L}(i))T]T\\
& =x_{k-2}(d_{i,k-1}^{n,L}x)[(d_{i,k-1}^{n,L}x)(d_{i,k}^{n,L}x)(d_{i,k}^{n-1,L}(i))T]T=x_{k-2}(d_{i,k-1}^{n,L}x)(d_{i,k-1}^{n-1,L}(i))T.
\end{align*}

Next, we show the equality of the rest of the coordinates in $d_i^{n-1,L}(i+1)$ and 
$d_i^{n-1,L}(i)$, using induction and A1:
\begin{align*}
& d_{i,k-1}^{n-1,L}(i+1)=(d_{i+1,k-1}^{n,L}x)(d_{i+1,k}^{n,L}x)(d_{i,k}^{n-1,L}(i+1))T
\\ & =(d_{i+1,k-1}^{n,L}x)(d_{i+1,k}^{n,L}x)(d_{i,k}^{n-1,L}(i))T
=[x_{k-2}x_{k-1}(d_{i+1,k}^{n,L}x)T](d_{i+1,k}^{n,L}x)(d_{i,k}^{n-1,L}(i))T\\
& =\{x_{k-2}x_{k-1}[x_{k-1}(d_{i,k}^{n,L}x)(d_{i,k}^{n-1,L}(i))T]T\}[x_{k-1}(d_{i,k}^{n,L}x)(d_{i,k}^{n-1,L}(i))T](d_{i,k}^{n-1,L}(i))T\\
& =(x_{k-2}x_{k-1}(d_{i,k}^{n,L}x)T)(d_{i,k}^{n,L}x)(d_{i,k}^{n-1,L}(i))T
=(d_{i,k-1}^{n,L}x)(d_{i,k}^{n,L}x)(d_{i,k}^{n-1,L}(i))T\\
& =d_{i,k-1}^{n-1,L}(i).
\end{align*}

Now assume that  
\[
d_{i'}^{n-1,L}d_{j'}^{n,L}x=d_{j'-1}^{n-1,L}d_{i'}^{n,L}x
\]
for $i'$, $j'$ such that $0\leq i'<j'\leq n$ and $j'-i'<j-i$, where $j-i\geq 2$.
For the rest of the paper, let $x[k]$ denote $(x_0,\ldots,x_{k-1}x_kx_{k+1}T,\ldots,x_{n+1})$. With this new notation $d_i^{n,L}x=d_{i-1}^{n,L}x[i]$
for $i\in\{1,\ldots,n\}$. We have
\begin{align*}
& d_i^{n-1,L}d_j^{n,L}x=d_i^{n-1,L}d_{j-1}^{n,L}x[j]
=d_{j-2}^{n-1,L}d_i^{n,L}x[j]\\
& =d_{j-2}^{n-1,L}(d_{i,1}^{n,L}x[j],\ldots,d_{i,j-2}^{n,L}x[j],d_{i,j-1}^{n,L}x[j],d_{i,j}^{n,L}x[j],d_{i,j+1}^{n,L}x[j],\ldots,d_{i,n+1}^{n,L}x[j])\\
& =d_{j-2}^{n-1,L}(d_{i,1}^{n,L}x[j],\ldots,d_{i,j-2}^{n,L}x[j],x_{j-1},x_{j-1}x_jx_{j+1}T,x_{j+1},\ldots,x_{n+1})
\end{align*}
and
\begin{align*}
& d_{j-1}^{n-1,L}d_i^{n,L}x=d_{j-1}^{n-1,L}(d_{i,1}^{n,L}x,\ldots,d_{i,j-1}^{n,L}x,d_{i,j}^{n,L}x,d_{i,j+1}^{n,L}x,\ldots,d_{i,n+1}^{n,L}x)
\\ & =d_{j-2}^{n-1,L}(d_{i,1}^{n,L}x,\ldots,d_{i,j-1}^{n,L}x,(d_{i,j-1}^{n,L}x)(d_{i,j}^{n,L}x)(d_{i,j+1}^{n,L}x)T,d_{i,j+1}^{n,L}x,\ldots,d_{i,n+1}^{n,L}x)\\
& =d_{j-2}^{n-1,L}(d_{i,1}^{n,L}x,\ldots,d_{i,j-2}^{n,L}x,x_{j-1},x_{j-1}x_jx_{j+1}T,x_{j+1},\ldots,x_{n+1}).
\end{align*}
Now the proof ends, since $d_{i,j-1}^{n,L}x=d_{i,j-1}^{n,L}x[j]=x_{j-1}$,
so from the formula (\ref{leftcoord}), for $k\leq j-2$, it follows that
\begin{align*}
& d_{i,k}^{n,L}(x_0,\ldots,x_{j-2},x_{j-1},x_{j-1}x_jx_{j+1}T,x_{j+1},\ldots,x_{n+1})
\\ & =d_{i,k}^{n,L}(x_0,\ldots,x_{j-2},x_{j-1},x_j,x_{j+1},\ldots,x_{n+1}).
\end{align*}
\end{proof}

\begin{definition}
Given a ternary operation $T$, let $\hat{T}$ denote the ternary operation
defined by $xyz\hat{T}=zyxT$.
\end{definition}

\begin{remark}\label{rev1}
$(X,T)$ satisfies A2 if and only if $(X,\hat{T})$ satisfies A1.\\
$(X,T)$ satisfies A1 if and only if $(X,\hat{T})$ satisfies A2.
\end{remark}

Let $y^r$ denote reversing the order of the elements of the tuple $y$; we will also use the linear extension of this operator, denoting it with the same symbol.
When two or more operators are considered, we will add their symbols to the differentials, as in the following lemma.
\begin{lemma}\label{convert}
$d_i^{n,R,T}x=(d_{n-i}^{n,L,\hat{T}}x^r)^r$ for $i\in\{0,\ldots,n\}$.
\end{lemma}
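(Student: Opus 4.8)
The plan is to reduce everything to the coordinatewise descriptions \eqref{leftcoord} and \eqref{rightcoord} of the face maps and then prove a single coordinatewise identity by a short induction. First observe that the asserted equality is purely formal: neither A3L nor A3R is used, since the formulas defining $d_i^{n,L}$ and $d_i^{n,R}$ make sense for an arbitrary ternary operation (the axioms only enter when one wants $\partial\partial=0$). Set $y:=x^r$, so $y_j=x_{n+1-j}$ for $j=0,\dots,n+1$. The tuple $d_i^{n,R,T}x$ has coordinates indexed by $j\in\{0,\dots,n\}$, while $d_{n-i}^{n,L,\hat T}y$ has coordinates indexed by $k\in\{1,\dots,n+1\}$; reversing the latter and re-indexing the result by $\{0,\dots,n\}$ sends its coordinate number $k$ to position $n+1-k$. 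Hence Lemma \ref{convert} is equivalent to
\[
d_{i,j}^{n,R,T}x=d_{n-i,\,n+1-j}^{n,L,\hat T}(x^r)\qquad\text{for all } j\in\{0,\dots,n\},
\]
with the substitution $k=n+1-j$ linking the two indexings.

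I would prove this identity by induction on $j$. The key point is that this substitution matches the two recursions correctly: $d^{n,R}$ is built left to right (increasing $j$) and $d^{n,L}$ right to left (decreasing $k$, i.e.\ increasing $n+1-k$), so the base case $j=0\leftrightarrow k=n+1$ is the first coordinate computed on each side, and both equal $x_0$ — indeed $0\le i$ gives $d_{i,0}^{n,R,T}x=x_0$ by \eqref{rightcoord}, and $n+1>n-i$ gives $d_{n-i,n+1}^{n,L,\hat T}y=y_{n+1}=x_0$ by \eqref{leftcoord}. For the step with $j\ge 1$ there are two cases. If $j\le i$ then $n+1-j>n-i$, so both sides are unchanged entries: the left side is $x_j$ and the right side is $y_{n+1-j}=x_j$. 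If $j>i$ then $n+1-j\le n-i$, so \eqref{rightcoord} gives $d_{i,j}^{n,R,T}x=(d_{i,j-1}^{n,R,T}x)\,x_j x_{j+1}T$, while \eqref{leftcoord} gives $d_{n-i,n+1-j}^{n,L,\hat T}y=y_{n-j}\,y_{n+1-j}\,(d_{n-i,n+2-j}^{n,L,\hat T}y)\,\hat T$; since $y_{n-j}=x_{j+1}$, $y_{n+1-j}=x_j$, and $d_{n-i,n+2-j}^{n,L,\hat T}y=d_{i,j-1}^{n,R,T}x$ by the induction hypothesis, the identity $uvw\hat T=wvuT$ turns the right side into $(d_{i,j-1}^{n,R,T}x)\,x_j x_{j+1}T$, matching the left side.

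The only real work is the index bookkeeping: keeping straight the three indexings (the $(n+2)$-tuples $x$ and $x^r$ on $\{0,\dots,n+1\}$, the $R$-coordinates on $\{0,\dots,n\}$, the $L$-coordinates on $\{1,\dots,n+1\}$) and verifying that $k=n+1-j$ makes the induction well-founded on both sides at once. Once that translation is fixed, each step is a one-line case check, so I do not anticipate any genuine obstacle. (A sanity check for $n=1$: $d_0^{1,R,T}(x_0,x_1,x_2)=(x_0,x_0x_1x_2T)$, and reversing $d_1^{1,L,\hat T}(x_2,x_1,x_0)=(x_0x_1x_2T,x_0)$ gives the same tuple.) An alternative, slightly messier route would be to induct on $i$ using the recursions $d_i^{n,L}x=d_{i-1}^{n,L}x[i]$ and its $R$-analogue together with a compatibility of $x\mapsto x[k]$ with reversal and $\hat T$, but the coordinate argument above is cleaner.
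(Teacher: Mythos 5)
Your proof is correct, and it takes a genuinely different route from the paper's. The paper proves the lemma by downward induction on the face index $i$ (formally on $n-i$): the base case $i=n$ is transparent, and the inductive step uses the substitution description of the face maps, namely that $d_{i-1}^{n,R,T}x=d_i^{n,R,T}x[i]$ and that reversing $x[i]$ produces $x^r$ with the corresponding entry rewritten via $\hat T$, so that the whole tuple identity is pushed from $i$ to $i-1$ in one stroke; this is essentially the ``alternative, slightly messier route'' you mention at the end, except that the needed compatibility of $x\mapsto x[k]$ with reversal and $\hat T$ is a one-line observation rather than a real obstacle. You instead fix $i$ and argue coordinatewise, inducting on the coordinate index $j$ and using only the recursions \eqref{rightcoord} and \eqref{leftcoord}; the substitution $k=n+1-j$ correctly aligns the left-to-right recursion for $d^{n,R}$ with the right-to-left recursion for $d^{n,L}$, your two cases $j\le i$ and $j>i$ match the case split in the coordinate formulas exactly, and the identity $uvw\hat T=wvuT$ closes the inductive step, so the argument is complete. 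What each approach buys: the paper's induction is shorter once the substitution relation $d_{i-1}^{n,R}x=d_i^{n,R}x[i]$ is granted and reuses machinery ($x[k]$) that recurs throughout the other proofs, while yours is more self-contained at the level of the explicit coordinate formulas, makes the index bookkeeping fully visible, and makes explicit the (true, and also implicit in the paper) fact that the lemma is purely formal and uses neither A3L nor A3R.
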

We leave a simple inductive proof to the reader. 

\begin{lemma} \label{secondmod}
$d_i^{n-1,R}d_j^{n,R}x=d_{j-1}^{n-1,R}d_i^{n,R}x$
for $0\leq i<j\leq n$.
\end{lemma}
\begin{proof}
If $T$ satisfies the axioms A1 and A2, then $\hat{T}$ satisfies them also, and the equation 
\[
d_i^{n-1,L,\hat{T}}d_j^{n,L,\hat{T}}=d_{j-1}^{n-1,L,\hat{T}}d_i^{n,L,\hat{T}}
\]
holds for $0\leq i<j\leq n$. Then, for $0\leq i<j\leq n$, we have
\begin{align*}
d_i^{n-1,R,T}d_j^{n,R,T}x & =d_i^{n-1,R,T}(d_{n-j}^{n,L,\hat{T}}x^r)^r
=(d_{n-1-i}^{n-1,L,\hat{T}}((d_{n-j}^{n,L,\hat{T}}x^r)^r)^r)^r\\
& =(d_{n-i-1}^{n-1,L,\hat{T}}d_{n-j}^{n,L,\hat{T}}x^r)^r
=(d_{n-j}^{n-1,L,\hat{T}}d_{n-i}^{n,L,\hat{T}}x^r)^r\\
& =(d_{n-1-(j-1)}^{n-1,L,\hat{T}}((d_{n-i}^{n,L,\hat{T}}x^r)^r)^r)^r
=(d_{n-1-(j-1)}^{n-1,L,\hat{T}}(d_i^{n,R,T}x)^r)^r\\
& =d_{j-1}^{n-1,R,T}d_i^{n,R,T}x.
\end{align*} 
\end{proof}

\begin{lemma}
$d_i^{n-1,R}d_j^{n,L}=d_{j-1}^{n-1,L}d_i^{n,R}$
for $0\leq i<j\leq n$.
\end{lemma}
\begin{proof}
The proof is by induction over $j-i$. First,
\[
d_i^{n-1,R}d_{i+1}^{n,L}=d_i^{n-1,L}d_i^{n,R},
\] 
for $i\in\{0,\ldots,n-1\}$,
follows from the equalities 
\[
d_{i,k-1}^{n-1,R}(d_{i+1,1}^{n,L}x,\ldots,d_{i+1,n+1}^{n,L}x)
=d_{i,k}^{n-1,L}(d_{i,0}^{n,R}x,\ldots,d_{i,n}^{n,R}x),
\]
that are true for $k\in\{1,\ldots,n\}$.
By the inductive step (with $j-1>i$):
\begin{align*}
& d_i^{n-1,R}d_j^{n,L}x=d_i^{n-1,R}d_{j-1}^{n,L}x[j]=d_{j-2}^{n-1,L}d_i^{n,R}
x[j]\\
& =d_{j-2}^{n-1,L}(d_{i,0}^{n,R}x[j],\ldots,d_{i,j-2}^{n,R}x[j],d_{i,j-1}^{n,R}x[j],
d_{i,j}^{n,R}x[j],\ldots,d_{i,n}^{n,R}x[j]),\\
& d_{j-1}^{n-1,L}d_i^{n,R}x
=d_{j-2}^{n-1,L}(d_{i,0}^{n,R}x,\ldots,d_{i,j-2}^{n,R}x,
(d_{i,j-2}^{n,R}x)(d_{i,j-1}^{n,R}x)(d_{i,j}^{n,R}x)T,\ldots,
d_{i,n}^{n,R}x).
\end{align*}
We check that the inputs for $d_{j-2}^{n-1,L}$ are equal in both expressions.
First, by comparing $d_{i,k}^{n,R}x[j]$ with $d_{i,k}^{n,R}x$ for $k\leq j-2$,
we see that the coordinate that makes a difference between $x$ and $x[j]$ is not used. Consider the $j$-th inputs:
\begin{align*}
& (d_{i,j-2}^{n,R}x)(d_{i,j-1}^{n,R}x)(d_{i,j}^{n,R}x)T\\
&=(d_{i,j-2}^{n,R}x)((d_{i,j-2}^{n,R}x)x_{j-1}x_jT)[((d_{i,j-2}^{n,R}x)x_{j-1}x_jT)x_{j}x_{j+1}T]T\\
& =(d_{i,j-2}^{n,R}x)x_{j-1}(x_{j-1}x_jx_{j+1}T)T=d_{i,j-1}^{n,R}x[j].
\end{align*}
For the $(j+1)$-st inputs, we have
\begin{align*}
d_{i,j}^{n,R}x[j] & =(d_{i,j-1}^{n,R}x[j])(x_{j-1}x_jx_{j+1}T)x_{j+1}T
\\ & =[(d_{i,j-2}^{n,R}x[j])x_{j-1}(x_{j-1}x_jx_{j+1}T)T](x_{j-1}x_jx_{j+1}T)x_{j+1}T
\\ & =((d_{i,j-2}^{n,R}x[j])x_{j-1}x_jT)x_jx_{j+1}T
=((d_{i,j-2}^{n,R}x)x_{j-1}x_jT)x_jx_{j+1}T \\
& =(d_{i,j-1}^{n,R}x)x_jx_{j+1}T=d_{i,j}^{n,R}x.
\end{align*}
The equalities of the later inputs follow inductively from the equality that we have just checked.
\end{proof}
\begin{lemma}
$d_i^{n-1,L}d_j^{n,R}=d_{j-1}^{n-1,R}d_i^{n,L}$
for $0\leq i<j\leq n$.
\end{lemma}
\begin{proof}
$d_0^{n,L}$ and $d_0^{n-1,L}$ only remove the left-most input, therefore
\[
d_0^{n-1,L}d_j^{n,R}=d_{j-1}^{n-1,R}d_0^{n,L}
\] for $j\in\{1,\ldots,n\}$.
Now we use the induction over $n-(j-i)$ (with 0 corresponding to the case $i=0$, $j=n$ that was just considered):
\begin{align*}
& d_{j-1}^{n-1,R}d_i^{n,L}x=d_{j-1}^{n-1,R}d_{i-1}^{n,L}x[i]
=d_{i-1}^{n-1,L}d_j^{n,R}x[i]\\
& =d_{i-1}^{n-1,L}(d_{j,0}^{n,R}x[i],\ldots,d_{j,i-1}^{n,R}x[i],
d_{j,i}^{n,R}x[i],d_{j,i+1}^{n,R}x[i],\ldots,d_{j,n}^{n,R}x[i]),\\
& d_i^{n-1,L}d_j^{n,R}x
=d_{i-1}^{n-1,L}(d_{j,0}^{n,R}x,\ldots,d_{j,i-1}^{n,R}x,
(d_{j,i-1}^{n,R}x)(d_{j,i}^{n,R}x)(d_{j,i+1}^{n,R}x)T,
\ldots,d_{j,n}^{n,R}x).
\end{align*}
Since $d_{j,k}^{n,R}(y_0,\ldots,y_{n+1})=y_k$ for $k\leq i+1\leq j$, there is equality of the first corresponding $i+2$ inputs for $d_{i-1}^{n-1,L}$ in $d_{j-1}^{n-1,R}d_i^{n,L}x$ and in $d_i^{n-1,L}d_j^{n,R}x$. The equality of the inputs with greater indices follows from the recursive definition of the coordinates of $d_j^{n,R}$.
\end{proof}

Now we define a degenerate subcomplex.

\begin{definition}\label{maindeg}
For $(X,T)$ and $\mathrm{R}$ as before, let $C_n^D(X,T)$ denote the $\mathrm{R}$-module generated freely by $(n+2)$-tuples 
$x=(x_0,x_1,\ldots, x_n,x_{n+1})$ of elements of $X$ with an index $j$, $0\leq j\leq n-1$, such that $x_{j+1}=x_{j}x_{j+1}x_{j+2}T$. For $n<1$, we take $C_n^D(X,T)=0$.
\end{definition}

\begin{lemma}
$\partial_n^{(\alpha,\beta)}(C_n^D(X,T))\subset C_{n-1}^D(X,T)$ for any $\alpha$, $\beta\in \mathrm{R}$.
\end{lemma}
\begin{proof}
Let $x$ be such that $x_{j+1}=x_{j}x_{j+1}x_{j+2}T$, with $0\leq j\leq n-1$.
We will show the inclusion for $\partial_n^L$, the proof for $\partial_n^R$ is completely symmetric. $d_i^{n,L}x$ contains at the end the sequence
$x_{i+1},\ldots,x_{n+1}$. It follows that $x_j$, $x_{j+1}$, $x_{j+2}$ occur also in all
$d_i^{n,L}x$ with $i\in\{0,\ldots, j-1\}$.
Now let $i=j+1$:
\begin{align*}
&d_{j+1}^{n,L}(x_0,\ldots,x_j,x_{j+1},x_{j+2},\ldots,x_{n+1})\\
& =d_j^{n,L}(x_0,\ldots,x_j,x_{j}x_{j+1}x_{j+2}T,x_{j+2},\ldots,x_{n+1})\\
& =d_j^{n,L}(x_0,\ldots,x_j,x_{j+1},x_{j+2},\ldots,x_{n+1}).
\end{align*}
But in $\partial_n^{L}$, $d_{j}^{n,L}$ and $d_{j+1}^{n,L}$ appear with opposite signs. Now let $j+2\leq i \leq n$. We will show that in $d_{i}^{n,L}x$, the triple $d^{n,L}_{i,j+1}x, d^{n,L}_{i,j+2}x, d^{n,L}_{i,j+3}x$ is degenerate. From the formula (\ref{leftcoord}), and the condition A1, it follows that
\begin{align*}
& (d^{n,L}_{i,j+1}x) (d^{n,L}_{i,j+2}x) (d^{n,L}_{i,j+3}x)T \\
& =[x_jx_{j+1}(x_{j+1}x_{j+2}(d^{n,L}_{i,j+3}x)T)T](x_{j+1}x_{j+2}(d^{n,L}_{i,j+3}x)T)(d^{n,L}_{i,j+3}x)T\\
& =(x_jx_{j+1}x_{j+2}T)x_{j+2}(d^{n,L}_{i,j+3}x)T=x_{j+1}x_{j+2}(d^{n,L}_{i,j+3}x)T=d^{n,L}_{i,j+2}x.
\end{align*}
\end{proof}

\section{Ternary quasigroups in knot theory}

In this section we introduce knot-theoretic ternary quasigroups, and show how to obtain from them coloring invariants of knots and knotted surfaces.

\begin{definition}\label{quasigroup}
A {\it ternary quasigroup} is a set $X$ equipped with a ternary operation 
$T\colon X^3\to X$ such that for a quadruple $(x_1,x_2,x_3,x_4)$
of elements of $X$ satisfying $x_1x_2x_3T=x_4$, specification of any three elements of the quadruple determines the remaining one uniquely. This leads to three additional ternary operations $\eL$, $\M$, $\R\colon X^3\to X$, defined via
\[x_4x_2x_3\eL=x_1,\ x_1x_4x_3\M=x_2,\ \textrm{and}\ x_1x_2x_4\R=x_3.\]
We call them the left, middle, and right division, respectively.
\end{definition}

A finite ternary quasigroup $(X,T)$, with elements numbered $1,\ldots,n$, can be described by a Latin cube, i.e., an $n\times n\times n$ array in which every $i\in\{1,\ldots,n\}$ appears exactly once in every horizontal row, every vertical row, and in every column. Any Latin cube defines a ternary quasigroup. 
See \cite{Bel,BelSan,Sm08} for more details on $n$-ary quasigroups.

\begin{definition} \label{KTQ}
A knot-theoretic ternary quasigroup (abbreviated to KTQ) is a ternary quasigroup satisfying the axioms A1 and A2.
\end{definition}

\begin{figure}
\begin{center}
\includegraphics[height=3 cm]{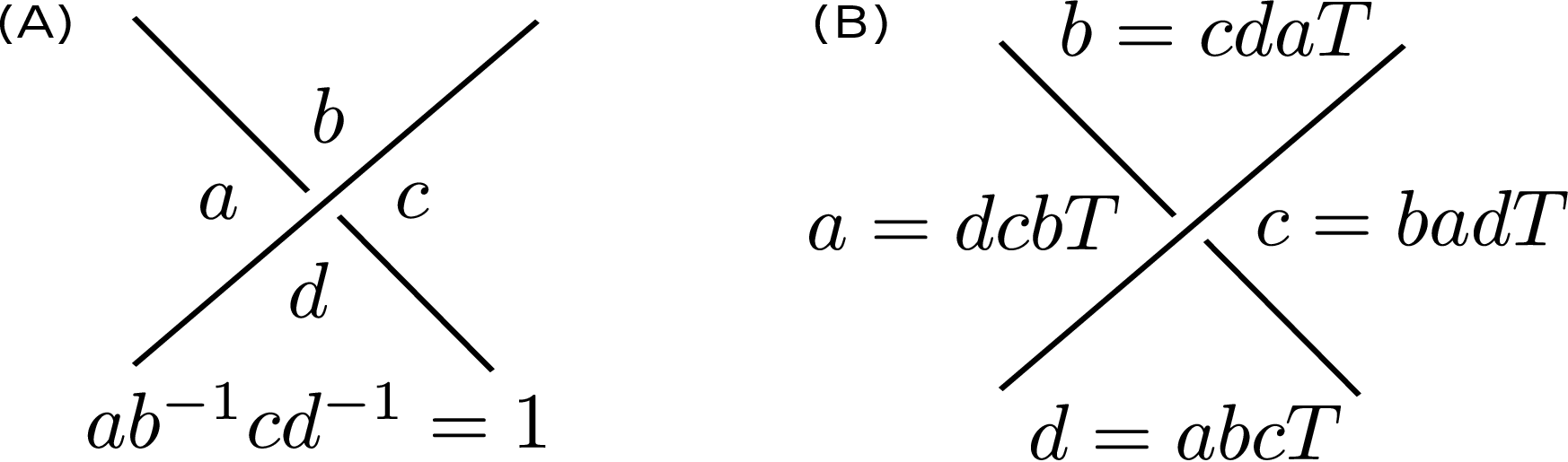}
\caption{A relation in Dehn presentation and its generalization using a ternary quasigroup $(X,T)$.}\label{Dehngen}
\end{center}
\end{figure}

Recall that the fundamental group of the complement of a knot in $\mathbb{R}^3$ can be given the following presentation, called Dehn presentation: generators are assigned to the regions in the complement of a knot diagram $D$ on a plane, and relations correspond to the crossings and are as in Fig. \ref{Dehngen}(A). One of the generators, say the one corresponding to the unbounded region, is set equal to identity. Geometrically, a generator can be viewed as a loop originating from a fixed point $P$ beneath the diagram, piercing a region to which it is assigned, and returning to $P$ through a region labeled with the identity element. See e.g. \cite{Kauffor} for more details about Dehn presentation. 

Note that the fundamental group relations between generators around a crossing can be realized using a ternary operation $xyzT=xy^{-1}z$. Each generator can be expressed using $T$ and the other three generators as in Fig. \ref{Dehngen}(B). Namely, if $x$, $y$, $z$, $w\in X$ are the generators near a crossing, then $w=xyzT$, where $w$ and $x$ are assigned to the regions separated by an over-arc, and $x$, $y$ and $z$ are taken cyclically.

Let $(X,T,\eL,\M,\R)$ be a knot-theoretic ternary quasigroup such that:
\begin{align*} 
& xyz\eL=xyzT^{(2,3)}:=xzyT,\\
& xyz\M=xyz\hat{T}=zyxT,\\
& xyz\R=xyzT^{(1,2)}:=yxzT.
\end{align*}
Then it is easy to check that a group with operation $xyzT=xy^{-1}z$ is an example of such a KTQ. An abstract KTQ of this kind, with generators corresponding to regions in a diagram, and relations of the form $d=abcT$, assigned to crossings as in Fig. \ref{Dehngen}(B), generalizes the knot group.

KTQs of the type $(X,T,T^{(2,3)},\hat{T},T^{(1,2)})$ were used to define knot invariants in \cite{Nie14}.
In \cite{NeNe17} the authors used compositions of two binary quasigroup operations of the form $x*(y\cdot z)$, satisfying the conditions A1 and A2, to define invariants for oriented knots. We also mention the paper \cite{Dev09}, in
which the author constructed combinatorial invariants of knots based on colorings of regions of a knot diagram by elements of some finite ring $\mathrm{R}$, with coloring requirements involving the equation $pa+b-c-pd=0$, for $a$, $b$, $c$, $d\in \mathrm{R}$ and an invertible element $p\in \mathrm{R}$. In this paper, we
show how to use general KTQs $(X,T,\eL,\M,\R)$ for colorings of oriented link diagrams, oriented Yoshikawa diagrams, and oriented knotted surface diagrams in $\mathbb{R}^3$. First, we recall some examples from \cite{Nie14}.

\begin{example}
Let $(X,*)$ be an extra loop. If we define $T$ by $xyzT=(x*y^{-1})*z$, then
$(X,T)$ is a KTQ.
\end{example} 

\begin{example}
Let $(X,*)$ be a Moufang loop. Then $xyzT=(y*x^{-1})*z$ defines a KTQ.
\end{example}

The following three examples are KTQs of the type $(X,T,T^{(2,3)},\hat{T},T^{(1,2)})$, with an additional property that $abaT=b$. This property simplifies the description of degenerate modules $C^D_n(X,T)$: they are generated by $(n+2)$-tuples of elements of $X$ containing $a$, $b$, $a$ on three consecutive coordinates, for some $a$ and $b\in X$.

\begin{example}
$X=\mathbb{R}^n$ with $pqrT=p+q-r$. 
Geometrically, $T$ reflects the point $r$ through the middle of the interval connecting the points $p$ and $q$.
\end{example}

\begin{example}\label{dih}
$X=\{0,\ldots,n-1\}$ with $pqrT=p+q-r \pmod n$. 
\end{example}

More generally:

\begin{example}
A group $(G,\cdot)$ with $xyzT=x\cdot z^{-1}\cdot y$. One can show that this operation yields the relations of the core group of a link, $core(L)$.
From the point of view of algebraic topology, $core(L)$ is the free product of the fundamental group of the cyclic branched double cover of $\mathbb{S}^3$ with branching set $L$ and the infinite cyclic group \cite{FR92, Wa92}.
\end{example}

\begin{figure}
\begin{center}
\includegraphics[height=2.5 cm]{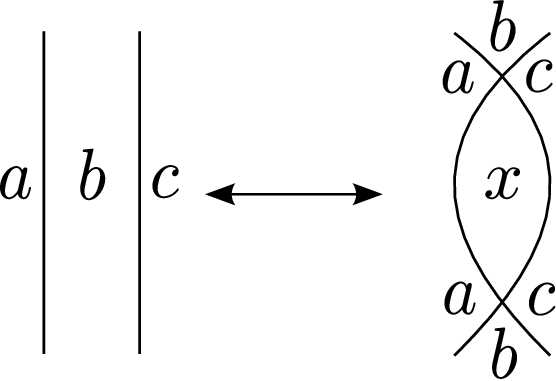}
\caption{A generic second Reidemeister move and its coloring.}\label{R2gen}
\end{center}
\end{figure}

\begin{figure}
\begin{center}
\includegraphics[height=3.7 cm]{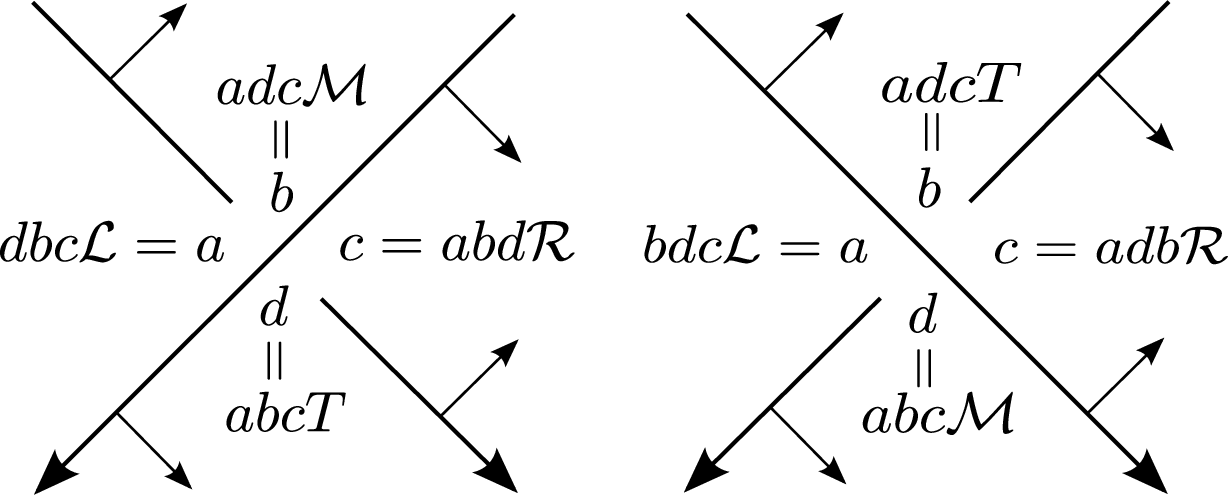}
\caption{Coloring classical crossings with a KTQ.}\label{quasio}
\end{center}
\end{figure}

In the reminder of the paper, $F$ will denote a compact oriented surface that may have a boundary.

\begin{definition} \label{classicalcol}
Let $D$ be a link diagram in the interior of $F$, or on the plane, and
let $(X,T)$ be a finite KTQ. {\it Regions} of $D$ are the connected components of $F\,\setminus$ universe of $D$ (or $\mathbb{R}^2 \setminus$ universe of $D$), and their set is denoted by $Reg$. A {\it KTQ-coloring} of $D$ is an assignment of elements of $X$ to the regions of $D$ satisfying the rule from Fig. \ref{quasio} at every crossing.
\end{definition}

\begin{remark}
For any choice of orientation, the coloring instructions for a KTQ $(X,T,\eL=T^{(2,3)},M=\hat{T},\R=T^{(1,2)})$ in Fig. \ref{Dehngen}(B) agree with the way of coloring described in Fig. \ref{quasio}. 
\end{remark}

We will briefly justify the fact of using ternary quasigroups.
In an oriented link diagram, each of the four corners around a crossing can be uniquely identified (for example, we can point to a corner adjacent to the two incoming edges). Fig. \ref{R2gen} represents a schematic colored second Reidemeister move, without specifying the types of crossings. Depending on the orientation, the corner colored by $x$ could be of any of the four types. If we are to have a coloring, then $x$ must exist for any $a$, $b$ and $c$. If the number of colorings of a diagram is to be unchanged by the move, then $x$ has to be unique. Thus, we reach a definition of a ternary quasigroup. We use its primary operation $T$ to color the corner adjacent to the outgoing edges of a positive crossing. In a negative crossing, $T$ is used for the corner adjacent to the incoming edges (Fig. \ref{quasio}).

\begin{lemma} \label{classicalRad}
For $D$ and $(X,T)$ as in Def. \ref{classicalcol}, the number of $KTQ$-colorings of $D$ does not change under Reidemeister moves. 
\end{lemma}
\begin{proof}
Let KTQ($D$) denote the abstract KTQ whose generators correspond to the regions in $Reg$, and relations correspond to crossings, and are as in Fig. \ref{quasio} (one of the four equivalent relations is assigned to each crossing). KTQ-colorings of $D$ can be understood as homomorphisms from KTQ($D$) to $(X,T)$. Therefore, to show that their number does not change under the Reidemeister moves, it is enough to show that the isomorphism class of KTQ($D$) does not change under these moves. 
KTQs can be defined equationally, thus they form a variety, and can be analyzed (up to isomorphism) using presentations and Tietze operations (see \cite{Cohn,Nie14}).
We leave the details of applying these operations to the reader.
\end{proof}

Consider links in the interior of $F\times I$, where $I$ denotes the interval. Then we can project links onto $F$ and work with the diagrams of links.
We have the following theorem as in \cite{Prz99} interpreting \cite{Hud69}.

\begin{theorem}\cite{Prz99,Hud69} \label{PrzHud}
Two link diagrams $D_1$ and $D_2$ in the interior of $F$ represent the same link in $F\times I$ if and only if one can go from $D_1$ to $D_2$ using Reidemeister moves and isotopy of $F$.
\end{theorem}

Thus, from Lemma \ref{classicalRad} and Theorem \ref{PrzHud} follows that KTQs yield invariants of links in $F\times I$.

\begin{figure}
\begin{center}
\includegraphics[height=2 cm]{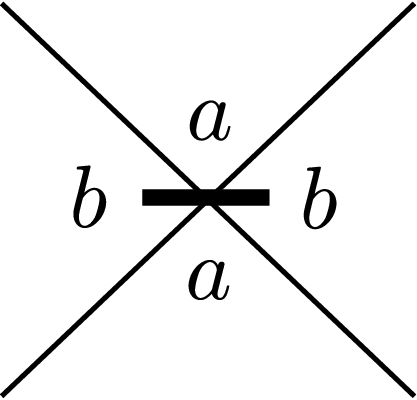}
\caption{The rule for assigning colors near Yoshikawa markers.}\label{yoshisurf}
\end{center}
\end{figure}

Knotted surfaces in $\mathbb{R}^4$ can be studied in various ways, e.g. using Yoshikawa diagrams \cite{Yo94,KJL15}. We can use KTQ-colorings for (oriented) Yoshikawa diagrams on $F$, or on the plane. Fig. \ref{yoshisurf} shows how to color around markers, regardless of the chosen orientation. Opposite corners are assigned the same color, and for classical crossings we use the convention from Fig. \ref{quasio}. A special case of such colorings for classical Yoshikawa diagrams was investigated in \cite{KN18}. 

One can assign an abstract $KTQ(D)$ to a given Yoshikawa diagram $D$, with generators corresponding to the regions of the diagram, and relations assigned to classical crossings as in Lemma \ref{classicalRad}. For a crossing with a marker, there are two relations equating the generators in the opposite corners of the crossing.

\begin{figure}
\begin{center}
\includegraphics[height=7.5 cm]{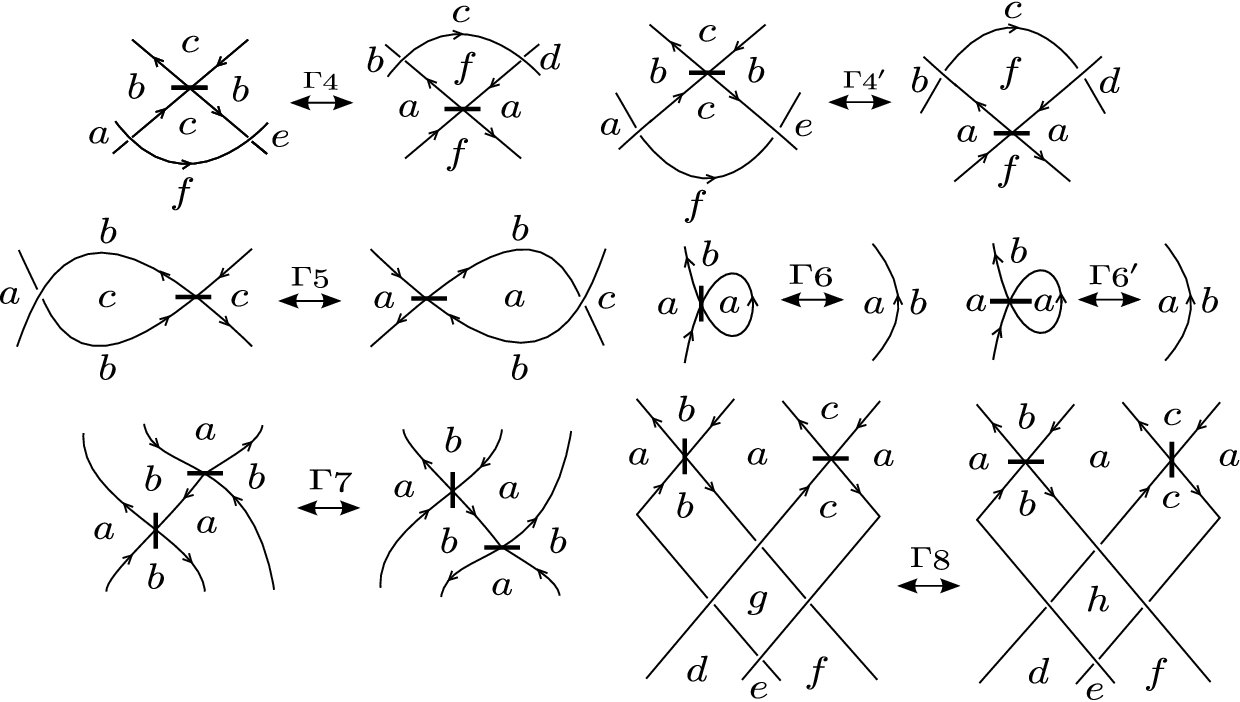}
\caption{A generating set of Yoshikawa moves.}\label{ymoves}
\end{center}
\end{figure}

\begin{lemma}\label{yoshiinv}
Let $D$ denote a Yoshikawa diagram on $F$ or on the plane. Then the isomorphism class of $KTQ(D)$ does not change under Yoshikawa moves. As a consequence, for a given KTQ, the number of KTQ-colorings is an invariant of Yoshikawa moves.
\end{lemma}
We leave the proof to the reader, noting that Fig. \ref{ymoves} shows the (KTQ-labeled) moves that together with the Reidemeister moves form a generating set for oriented Yoshikawa moves \cite{KJL15}.

Now we turn our attention to knotted surfaces described via projections to $\mathbb{R}^3$.

\begin{definition}
Let $S$ be a closed surface embedded smoothly in $\mathbb{R}^{4}$ and let $p\colon \mathbb{R}^{4}=\mathbb{R}^{3}\times\mathbb{R}\to\mathbb{R}^{3}$ be the projection.
$S$ is assumed to be in general position with respect to $p$.
By a {\it diagram} of $S$, we mean the image $p(S)$ equipped with the under-over information at each transverse double point. For example, one can use a {\it broken diagram} in which fragments of the projection are removed to indicate which part of the surface was higher before the projection. See \cite{KnSurf} for details on broken surface diagrams, and more general information about knotted surfaces and their descriptions. The closure of double point set of $p(S)$ is a graph with vertices of degree 1 (if there is a branch point) or 6 (for a triple point); loops with no vertices are also possible. We will refer to the edges of this graph (and to loops without vertices) as {\it double point edges}. 
\end{definition}
 
This time, the regions $Reg$ to which the elements of KTQs will be assigned are three-dimensional; they are the components of $\mathbb{R}^{3}-p(S)$. When working with KTQ invariants of surfaces, it is convenient to have co-orientation.

\begin{definition}{\cite{KnSurf, CJKLS03}}
Suppose that the surface $S$ is oriented. We can give a co-orientation to the complement of the branch point set $Br$ on $p(S)$ as follows. For a point 
$x\in p(S)- Br$, choose vectors $v_1$, $v_2$ that are tangent to $p(S)$ in $\mathbb{R}^3$, so that the oriented frame $(v_1,v_2)$ matches the orientation of $S$. Then a normal vector $v(x)$ in $\mathbb{R}^3$ is chosen so that the ordered triple $(v,v_1,v_2)$ matches the orientation of 3-space.
\end{definition}

Locally, there are four regions near a double point edge. The following definitions will simplify the description of KTQ-colorings and homological considerations.

\begin{definition}
For a double point edge in a projection $p(S)$ (resp. double point in a classical knot diagram), the {\it source region} is the region such that all co-orientation arrows point out of it, and the {\it target region} is the region such that all co-orientation arrows point into it. An {\it ascending path} is a triple of regions $(r_1,r_2,r_3)$, where $r_1$ is the source region, $r_1$ and $r_2$ are separated by an under-sheet (resp. under-arc), and $r_3$ is the target region. If elements of a given KTQ $(X,T)$ are assigned to the regions via a function $c\colon Reg\to X$, then a {\it colored path} is the triple 
$(c(r_1),c(r_2),c(r_3))$.
\end{definition}

\begin{figure}
\begin{center}
\includegraphics[height=4 cm]{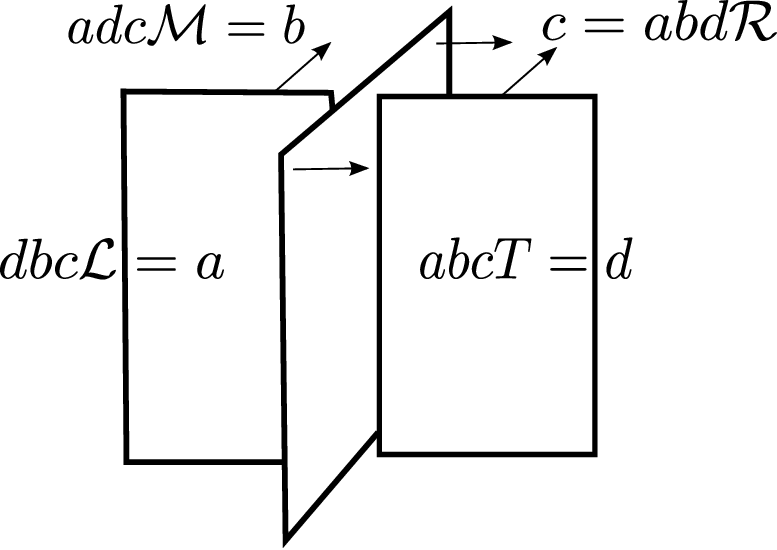}
\caption{The coloring rule around an edge of double points in a surface diagram.}\label{surfcr}
\end{center}
\end{figure}

\begin{definition}
Let $(X,T)$ be a KTQ, and let $D$ denote an oriented knotted surface diagram. A {\it KTQ-coloring} is a function $c\colon Reg\to X$ such that for each double point edge $\gamma$ of $D$, the region near $\gamma$ which is not in its ascending path 
$(r_1,r_2,r_3)$, receives the color $c(r_1)c(r_2)c(r_3)T$, see Fig. \ref{surfcr}. 
Note that the coloring rule in Fig. \ref{quasio} can be understood similarly.
\end{definition}

\begin{definition}
For an oriented knotted surface diagram $D$, we can assign to it an abstract knot-theoretic ternary quasigroup $KTQ(D)$, with generators in one to one correspondence with regions, and relations corresponding to double point edges. For a double point edge $\gamma$ with its ascending path $(r_1,r_2,r_3)$ and its fourth region $r_4$, the relation is $r_4=r_1r_2r_3T$.
\end{definition}

\begin{figure}
\begin{center}
\includegraphics[height=7 cm]{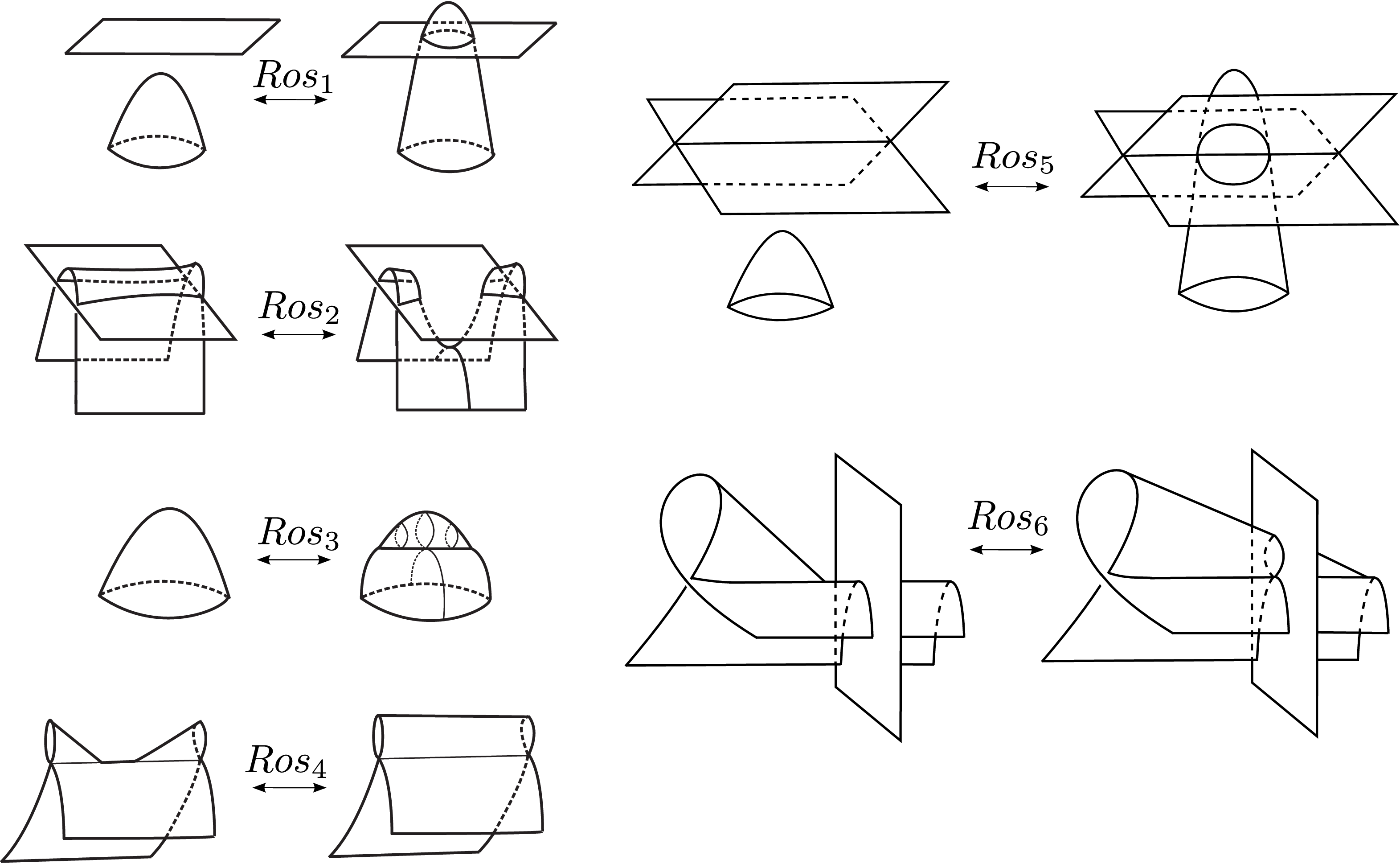}
\caption{Roseman moves 1-6.}\label{Ros}
\end{center}
\end{figure}

It is known that two knotted surface diagrams $D_1$ and $D_2$ represent the same knotted surface if and only if they are related by a finite sequence of Roseman moves \cite{Ros95} (Fig. \ref{Ros} and Fig. \ref{Ros7}). 

\begin{figure}
\begin{center}
\includegraphics[height=6.9 cm]{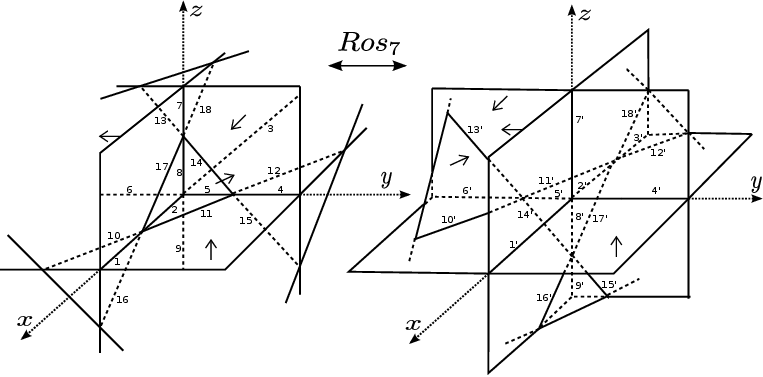}
\caption{The seventh Roseman move with an assigned orientation and relation labels.}\label{Ros7}
\end{center}
\end{figure}

\begin{theorem} \label{Rosthm}
Roseman moves do not change the isomorphism class of $KTQ(D)$. Thus, the number of KTQ-colorings of a knotted surface diagram is an invariant of the surface.
\end{theorem}

\begin{proof}
We prove the invariance under the seventh Roseman move, leaving the remaining moves to the reader.
In the seventh Roseman move, Fig. \ref{Ros7}, there is a triple point in the intersection of three planes (in our illustration they are the $xy$-plane, $xz$-plane and the $yz$-plane, and the triple point is (0,0,0)). The fourth plane (say, $x+y+z=1$) moves to the other side of the triple point. This move is also called tetrahedral move, as it involves a tetrahedron in the first octant (depicted with solid lines) before the move, and another one after the move (in the (-,-,-) octant). The co-orientation is as shown in the figure, and we assume that the sheets are ordered from the highest (unbroken in broken diagrams) to the lowest (the most broken) as follows:
the $xy$-plane, the $xz$-plane, the $yz$-plane, and the $x+y+z=1$ plane.
The symbols of generators are organized according to octants. All the octants, except one, are divided by the lowest plane (both before and after the move), and  they are identified by the signs of coordinates.
The symbols of generators are with bars if they are behind the lowest plane, and without bars if they are in front of it. The following is the list of octants with generators assigned to their regions: (+,+,+): $a$, $\bar{a}$; (-,+,+): $b$, $\bar{b}$; (-,-,+): $c$, $\bar{c}$; (+,-,+): $d$, $\bar{d}$;
(+,+,-): $e$, $\bar{e}$; (-,+,-): $f$, $\bar{f}$; (-,-,-): $g$, $\bar{g}$;
(+,-,-): $h$, $\bar{h}$. Before the move, the relations assigned to double point edges (with their numbers shown in Fig. \ref{Ros7}, but see also Fig. \ref{triplecrs}) are as follows:
(1) $a=ehdT$, (2) $\bar{a}=\bar{e}\bar{h}\bar{d}T$, (3) $\bar{b}=\bar{f}\bar{g}\bar{c}T$, (4) $b=feaT$, (5) $\bar{b}=\bar{f}\bar{e}\bar{a}T$, (6) $\bar{c}=\bar{g}\bar{h}\bar{d}T$,
(7) $c=badT$, (8) $\bar{c}=\bar{b}\bar{a}\bar{d}T$, (9) $\bar{g}=\bar{f}\bar{e}\bar{h}T$, (10) $\bar{d}=\bar{h}hdT$, (11) $\bar{a}=\bar{e}eaT$, (12) $\bar{b}=\bar{f}fbT$, (13) $\bar{d}=\bar{c}cdT$, (14)
$\bar{a}=\bar{b}baT$, (15) $\bar{e}=\bar{f}feT$, (16) $\bar{h}=\bar{e}ehT$,
(17) $\bar{d}=\bar{a}adT$, and (18) $\bar{c}=\bar{b}bcT$. After the move, the relations are: (1') $a=ehdT$, (2') $b=fgcT$, (3') $\bar{b}=\bar{f}\bar{g}\bar{c}T$,
(4') $b=feaT$, (5') $c=ghdT$, (6') $\bar{c}=\bar{g}\bar{h}\bar{d}T$,
(7') $c=badT$, (8') $g=fehT$, (9') $\bar{g}=\bar{f}\bar{e}\bar{h}T$, (10') $\bar{d}=\bar{h}hdT$, (11') $\bar{c}=\bar{g}gcT$, (12') $\bar{b}=\bar{f}fbT$,
(13') $\bar{d}=\bar{c}cdT$, (14') $\bar{h}=\bar{g}ghT$, (15') $\bar{e}=\bar{f}feT$,
(16') $\bar{h}=\bar{e}ehT$, (17') $\bar{g}=\bar{f}fgT$, and (18') $\bar{c}=\bar{b}bcT$. The intersection of these two sets of relations does not contain (2), (5), (8), (11), (14), (17), and (2'), (5'), (8'), (11'), (14'), (17'). We will show that in these two six-element subsets, five relations are the consequences of the other relations from their sets, and the last one can be removed together with the generator corresponding to the tetrahedral region. 
Thus, we will obtain identical presentations before and after the move.
We will reveal relations as consequences, and remove them, in the following order:
(14), (17), (11), (8), (5):
\begin{align*}
&\bar{b}baT\stackrel{(12)}{=}(\bar{f}fbT)baT\stackrel{(4)}{=}
[\bar{f}f(feaT)T](feaT)aT\stackrel{(A1)}{=}(\bar{f}feT)eaT\stackrel{(15)}{=}
\bar{e}eaT\stackrel{(11)}{=}\bar{a},\\
&\bar{a}adT\stackrel{(11)}{=}(\bar{e}eaT)adT\stackrel{(1)}{=}
[\bar{e}e(ehdT)T](ehdT)dT\stackrel{(A1)}{=}(\bar{e}ehT)hdT\stackrel{(16)}{=}
\bar{h}hdT\stackrel{(10)}{=}\bar{d},\\
&\bar{e}eaT\stackrel{(1)}{=}\bar{e}e(ehdT)T\stackrel{(A2)}{=}
\bar{e}(\bar{e}ehT)[(\bar{e}ehT)hdT]T\stackrel{(16)}{=}\bar{e}\bar{h}(\bar{h}hdT)T
\stackrel{(10)}{=}\bar{e}\bar{h}\bar{d}T\stackrel{(2)}{=}\bar{a},\\
&\bar{b}\bar{a}\bar{d}T\stackrel{(5)}{=}(\bar{f}\bar{e}\bar{a}T)\bar{a}\bar{d}T\stackrel{(2)}{=}
[\bar{f}\bar{e}(\bar{e}\bar{h}\bar{d}T)T](\bar{e}\bar{h}\bar{d}T)\bar{d}T
\stackrel{(A1)}{=}(\bar{f}\bar{e}\bar{h}T)\bar{h}\bar{d}T\stackrel{(9)}{=}
\bar{g}\bar{h}\bar{d}T\stackrel{(6)}{=}\bar{c},\\
&\bar{f}\bar{e}\bar{a}T\stackrel{(2)}{=}
\bar{f}\bar{e}(\bar{e}\bar{h}\bar{d}T)T\stackrel{(A2)}{=}
\bar{f}(\bar{f}\bar{e}\bar{h}T)[(\bar{f}\bar{e}\bar{h}T)\bar{h}\bar{d}T]T
\stackrel{(9)}{=}\bar{f}\bar{g}(\bar{g}\bar{h}\bar{d}T)T\stackrel{(6)}{=}
\bar{f}\bar{g}\bar{c}T\stackrel{(3)}{=}\bar{b}.
\end{align*}
Now the generator $\bar{a}$ appears only in (2), so we remove it together with its relation.
For the second subset of relations, we use the order
(14'), (11'), (17'), (2'), (5'):
\begin{align*}
&\bar{g}ghT\stackrel{(17')}{=}(\bar{f}fgT)ghT\stackrel{(8')}{=}
[\bar{f}f(fehT)T](fehT)hT\stackrel{(A1)}{=}
(\bar{f}feT)ehT\stackrel{(15')}{=}\bar{e}ehT\stackrel{(16')}{=}\bar{h},\\
&\bar{g}gcT\stackrel{(17')}{=}(\bar{f}fgT)gcT\stackrel{(A1)}{=}
[\bar{f}f(fgcT)T](fgcT)cT\stackrel{(2')}{=}(\bar{f}fbT)bcT\stackrel{(12')}{=}
\bar{b}bcT\stackrel{(18')}{=}c,\\
&\bar{f}fgT\stackrel{(8')}{=}\bar{f}f(fehT)T\stackrel{(A2)}{=}
\bar{f}(\bar{f}feT)[(\bar{f}feT)ehT]T\stackrel{(15')}{=}
\bar{f}\bar{e}(\bar{e}ehT)T\stackrel{(16')}{=}\bar{f}\bar{e}\bar{h}T
\stackrel{(9')}{=}\bar{g},\\
&fgcT\stackrel{(5')}{=}fg(ghdT)T\stackrel{(8')}{=}
f(fehT)[(fehT)hdT]T\stackrel{(A2)}{=}fe(ehdT)T\stackrel{(1')}{=}feaT\stackrel{(4')}{=}b,\\
&ghdT\stackrel{(8')}{=}(fehT)hdT\stackrel{(A1)}{=}
[fe(ehdT)T](ehdT)dT\stackrel{(1')}{=}(feaT)adT\stackrel{(4')}{=}badT\stackrel{(7')}{=}c.
\end{align*}
The generator $g$ is now only present in the relation (8'), thus we remove it with this relation, obtaining identical presentations before and after the move.
As explained in \cite{CJKLS03}, other versions of the seventh Roseman move can be obtained using this particular version and the fifth Roseman moves. 
\end{proof}
In the next section, we will use KTQ-colorings as a basis for (co)homological invariants.

\section{Geometric applications of KTQ homology}
\subsection{KTQ homology and the cycles associated with diagrams}

\begin{definition}
Let $(X,T)$ be a KTQ.
With the notation as in Def. \ref{maindefs}, Thm. \ref{mainhom}, and Def. \ref{maindeg}, we define KTQ homology, $H^N(X,T)$, as homology of the quotient complex
\[ 
(C_n^{N}(X,T),\partial_n):=(C_n(X)/C_n^D(X,T),\partial_n^{(1,-1)}).
\]
\end{definition}

\begin{example} In low dimensions the differential 
$\partial_n=\partial_n^L-\partial_n^R$ is as follows:
\[
\partial_0(a,b)=b-a,
\]
\begin{align*}
\partial_1(a,b,c)&=(b,c)-(a,abcT)\\
 &-(abcT,c)+(a,b),
\end{align*}
\begin{align*} 
\partial_2(a,b,c,d)&=(b,c,d)-(a,abcT,(abcT)cdT)\\
&-(abcT,c,d)+(a,b,bcdT)\\
&+(ab(bcdT)T,bcdT,d)-(a,b,c),
\end{align*}
\begin{align*}
\partial_3(a,b,c,d,e)&=(b,c,d,e)-(a,abcT,(abcT)cdT,[(abcT)cdT]deT)\\
&-(abcT,c,d,e)+(a,b,bcdT,(bcdT)deT)\\
&+(ab(bcdT)T,bcdT,d,e)-(a,b,c,cdeT)\\
&-(ab[bc(cdeT)T]T,bc(cdeT)T,cdeT,e)+(a,b,c,d).
\end{align*}
\end{example}

\begin{remark}
Given an $n$-element KTQ $(X,T)$ that colors a knot diagram, one can consider an associated $n^2$-element structure $LB(X,T)$ consisting of pairs of elements of $X$, with an idea that a pair $(a,b)$ is assigned to an arc with neighbouring regions colored by $a$ and $b$. The KTQ operation then induces $2n$ partial binary operations on $X^2$, defined when two suitable coordinates in the two pairs are the same. Such a structure was defined in \cite{NOO19}, and the authors showed that the (co)homology groups that they construct for it are isomorphic to KTQ (co)homology groups. The differential that was obtained has a somewhat similar form to biquandle homology differential, but it involves aforementioned $2n$ operations, and therefore also a ternary operation and pairs of elements. The authors also noticed the 
equivalence of definitions of degenerate triples for ternary quasigroups: they can be defined as triples $a$, $b$, $c$ with $abcT=b$, or by $a$, $b$, $abb\R$, for any $a$ and $b\in X$. Indeed, $b=abcT$ if and only if $c=abb\R$.
\end{remark}

Let $D$ denote either a link diagram or a Yoshikawa diagram (on a compact oriented surface $F$, or on a plane), or a knotted surface diagram in $\mathbb{R}^3$.
If $D$ is colored by the elements of a given KTQ $(X,T)$, then we can assign to it a cycle with respect to the differential $\partial_n$. We will show that the homology class of this cycle is not changed by the moves of the type applicable to the diagram $D$ (Reidemeister, Yoshikawa, or Roseman moves). 

\begin{definition}{\cite{KnSurf, CJKLS03}}
The sign of a triple point in a surface diagram is defined as follows. Let $v_T$, $v_M$ and $v_B$ be co-orientation normal vectors to the top, middle, and bottom sheets, respectively, that intersect at a triple point. If the oriented frame $(v_T,v_M,v_B)$ coincides with the right hand
orientation of 3-space, the triple point is said to be {\it positive}, otherwise it is {\it negative}.

For a triple point, the {\it source region} is the region near the triple point such that $v_T$, $v_M$ and $v_B$ point away from it, and the {\it target region} is the region such that $v_T$, $v_M$ and $v_B$ point towards it. 
\end{definition}  

\begin{definition}
For a triple point of an oriented surface diagram, the term {\it ascending path} will mean a sequence of regions $(r_0,r_1,r_2,r_3)$, starting with the source region, and ending with the target region, such that the regions $r_0$ and $r_1$ are separated by the bottom sheet, $r_1$ and $r_2$ by the middle sheet, and $r_2$ and $r_3$ by the top sheet. Let $c\colon Reg\to X$ be a function from the set of regions of the surface diagram to a fixed KTQ $(X,T)$. Then the sequence
$(c(r_0),c(r_1),c(r_2),c(r_3))$ will be called a {\it colored path}.
\end{definition}

\begin{definition} \label{associatedchain}
Let $D$ be a KTQ-colored link diagram or a Yoshikawa diagram on a compact oriented surface $F$, or on a plane. We define its {\it associated chain}, $c_D$, as the sum of the colored paths over all crossings of $D$, taken with the sign of a crossing. If $D$ denotes a KTQ-colored knotted surface diagram in $\mathbb{R}^3$, then the associated chain $c_D$ is the sum of the colored paths over all triple points of $D$, taken with the sign of a triple point.
\end{definition}

\begin{figure}
\begin{center}
\includegraphics[height=4 cm]{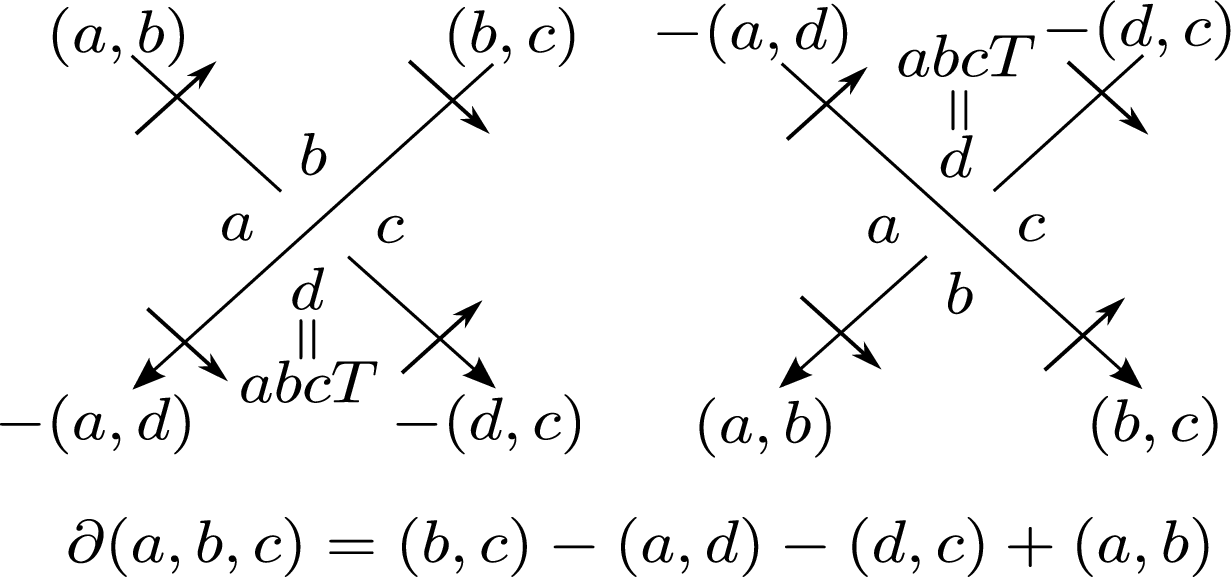}
\caption{The differential of a colored path of a crossing.}\label{quasidiff}
\end{center}
\end{figure}

\begin{figure}
\begin{center}
\includegraphics[height=3 cm]{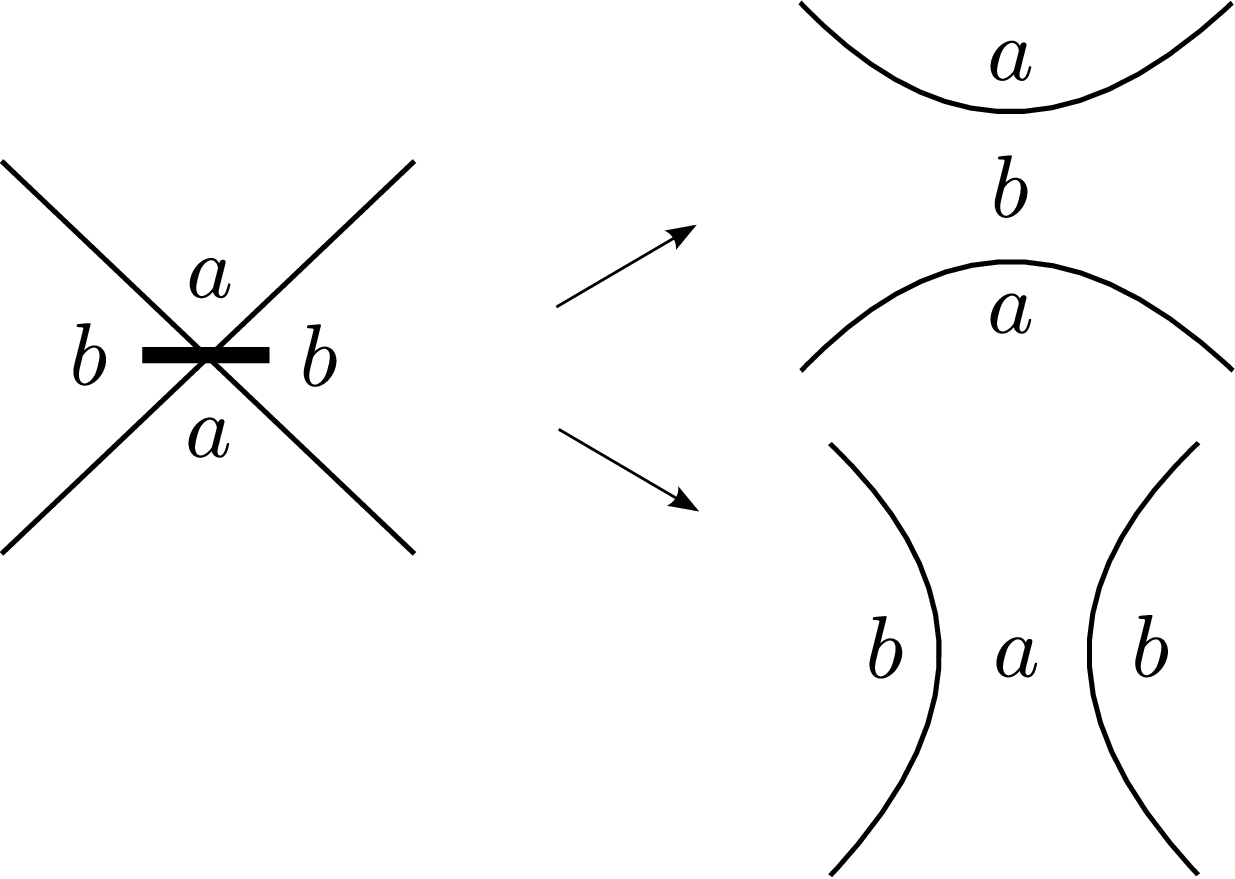}
\caption{Smoothings of Yoshikawa markers are compatible with KTQ-colorings.}\label{yoshism}
\end{center}
\end{figure}

\begin{lemma} For a KTQ-colored oriented link or Yoshikawa diagram $D$ on a compact oriented surface $F$ (or a plane), its associated chain $c_D$ is a cycle with respect to the differential $\partial_1$.
\end{lemma} 
\begin{proof}
First, let $D$ be a link diagram.
We see from Fig. \ref{quasidiff}, that in the differential 
\begin{align*}
\partial_1(a,b,c)&= (b,c)-(a,abcT)-(abcT,c)+(a,b)\\
&=(b,c)-(a,d)-(d,c)+(a,b)
\end{align*}
of a colored path of a positive crossing, the positive pairs of colors correspond to the incoming edges, and the negative pairs can be assigned to the outgoing edges. For a negative crossing, the chain assigned to it is $-(a,b,c)$. Thus,
in $\partial_1[-(a,b,c)]=-(b,c)+(a,d)+(d,c)-(a,b)$, again the positive pairs correspond to the incoming edges, and the negative pairs to the outgoing edges. Also, the order of colors in each pair is pointed by the co-orientation of a diagram.
This ensures that the two ends of an edge give the same pair of colors, but with opposite signs. Thus, $\partial_1(c_D)=0$. 
If $D$ is a Yoshikawa diagram, then smoothing all the markers in one of the ways shown in Fig. \ref{yoshism} is compatible with the KTQ-coloring. Then it is enough to notice that $c_D$ is the same as the associated cycle of a link diagram obtained after marker smoothings.
\end{proof}

\begin{figure}
\begin{center}
\includegraphics[height=6cm]{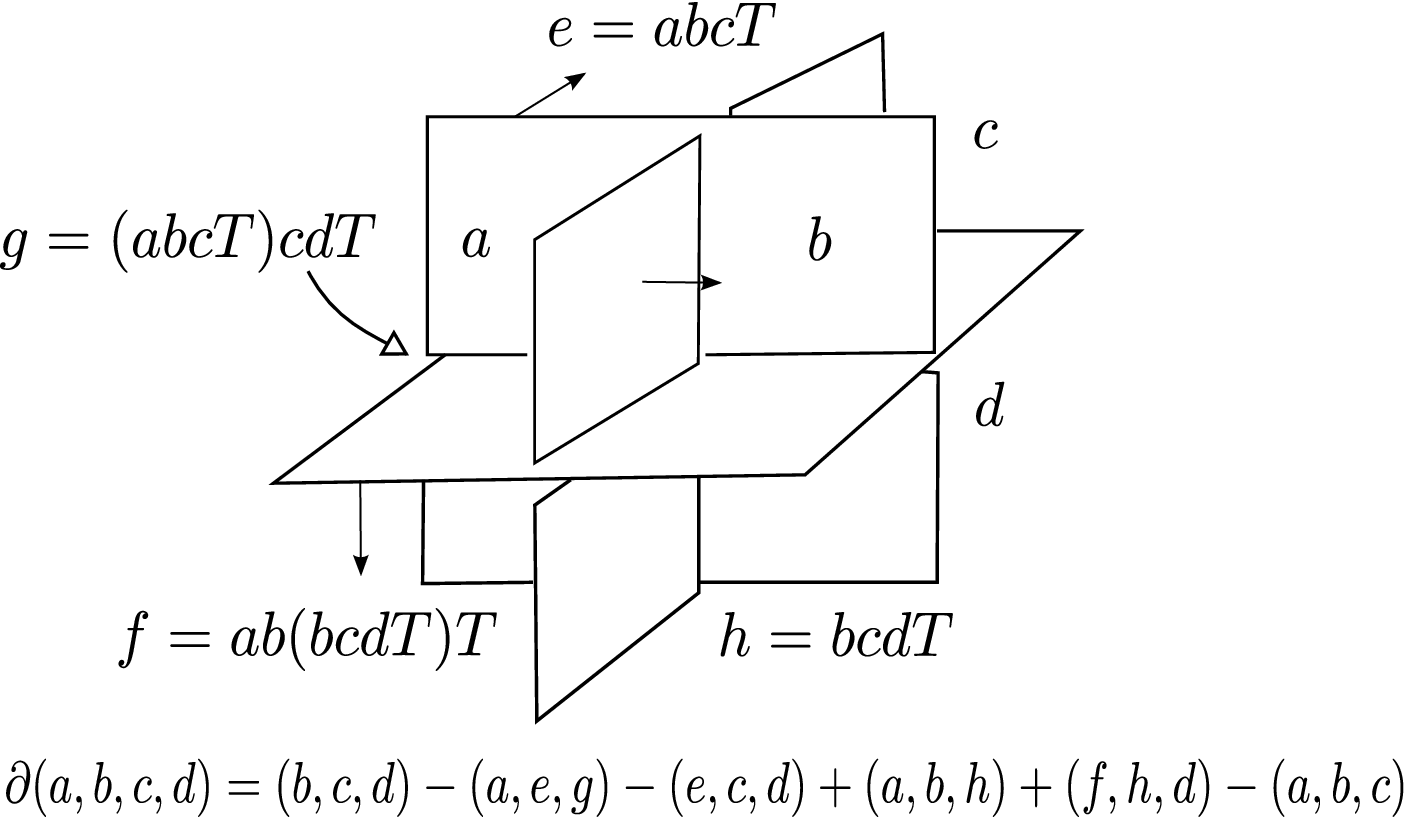}
\caption{A triple point with the differential of a colored path. }\label{triplecrdiff}
\end{center}
\end{figure}

\begin{definition}[\cite{KnSurf}]
For an oriented surface $S$, each double point edge $\gamma$ is oriented as follows: if $v_t$ is the oriented normal to the top sheet, 
$v_b$ is the oriented normal to the bottom sheet, and $v_\gamma$ is the vector tangent to the edge $\gamma$, then it is required that the ordered triple $(v_\gamma,v_t,v_b)$ matches the orientation of $3$-space by the right-hand convention. A triple point always has three double point edges oriented towards it, and three edges with orientation out of it. 
\end{definition}

\begin{lemma}
For a KTQ-colored knotted surface diagram $D$ in $\mathbb{R}^3$, its associated chain $c_D$ is a cycle with respect to the differential $\partial_2$.
\end{lemma}

\begin{proof}
Calculating the differential on a colored path of a triple point corresponds to taking a sum of six suitably signed colored paths of double point edges connected at the triple point.
The incoming double point edges are assigned the colored paths with a minus sign, the outgoing edges receive positive colored paths, as in Fig. \ref{triplecrdiff}. Thus, if a double point edge ends with triple points, the signed triples assigned to it at its ends will cancel out. If an edge ends in a branch point, then the triple assigned to it is a degenerate cycle of the form $\pm (x,y,xyy\R)$.
\end{proof}

\begin{figure}
\begin{center}
\includegraphics[height=2.5 cm]{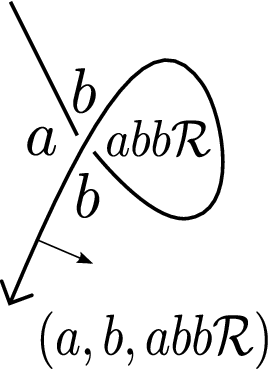}
\caption{The degenerate cycle for the first Reidemeister move.}\label{degR1os}
\end{center}
\end{figure}

\begin{lemma} \label{Reidhominv}
The KTQ homology class of a cycle $c_D$ assigned to an oriented and colored link diagram is invariant under Reidemeister moves.
\end{lemma}

\begin{proof}
The first Reidemeister move adds or removes a degenerate cycle $(a,b,abb\R)$ (Fig. \ref{degR1os}), so it does not change the KTQ homology class. The contributions coming from the two crossings in the second Reidemeister move cancel out, because the crossings have opposite signs. Now consider the third Reidemeister move with all the crossings positive, with labels as in Fig. \ref{scattering}, with orientation from top to bottom. The contributions from the crossings after the move, minus the contributions before the move are equal to the boundary 
\begin{align*}
\partial_2(a,b,c,d) & =(b,c,d)-(a,abcT,(abcT)cdT)\\
& -(abcT,c,d)+(a,b,bcdT)\\
& +(ab(bcdT)T,bcdT,d)-(a,b,c).
\end{align*}
\end{proof}

\begin{lemma}
The KTQ homology class of a cycle $c_D$ assigned to an oriented and KTQ-colored Yoshikawa diagram $D$ is invariant under Yoshikawa moves. If the diagram is on a plane, then $c_D$ represents zero in homology.
\end{lemma}

\begin{proof}
With the invariance under the Reidemeister moves proved as in Lemma \ref{Reidhominv},
we need to check the moves with markers from Fig. \ref{ymoves}. In the colored moves $\Gamma_4$ and $\Gamma_4'$, it is easy to see that $a=e$ and $b=d$. Thus, both before and after the moves, two classical crossings with the opposite signs yield the same colored paths, and the total contribution is zero.
In colored $\Gamma_5$, both sides of the move contribute $(a,b,c)$. $\Gamma_6$, $\Gamma_6'$, and $\Gamma_7$ do not have any classical crossings, so do not contribute any colored paths. In colored $\Gamma_8$, it can be shown that $d=c$, $a=e$, and $b=f$. The chain assigned to the left side of the figure is
$(g,c,a)-(g,c,a)+(g,c,a)-(g,c,a)=0$, and on the right side it is 
$(h,b,a)-(h,b,a)+(h,b,a)-(h,b,a)=0$.

The condition required of Yoshikawa diagrams on the plane is that if all the markers are smoothed in the same way (as in Fig. \ref{yoshism}), the resulting diagram is a diagram of an unlink. Since the cycle $c_D$ assigned to a Yoshikawa diagram is the same as the cycle assigned to the link obtained after its marker smoothings, in case of $D$ on the plane, $c_D$ will be homologically trivial.
\end{proof}

\begin{figure}
\begin{center}
\includegraphics[height=8.5 cm]{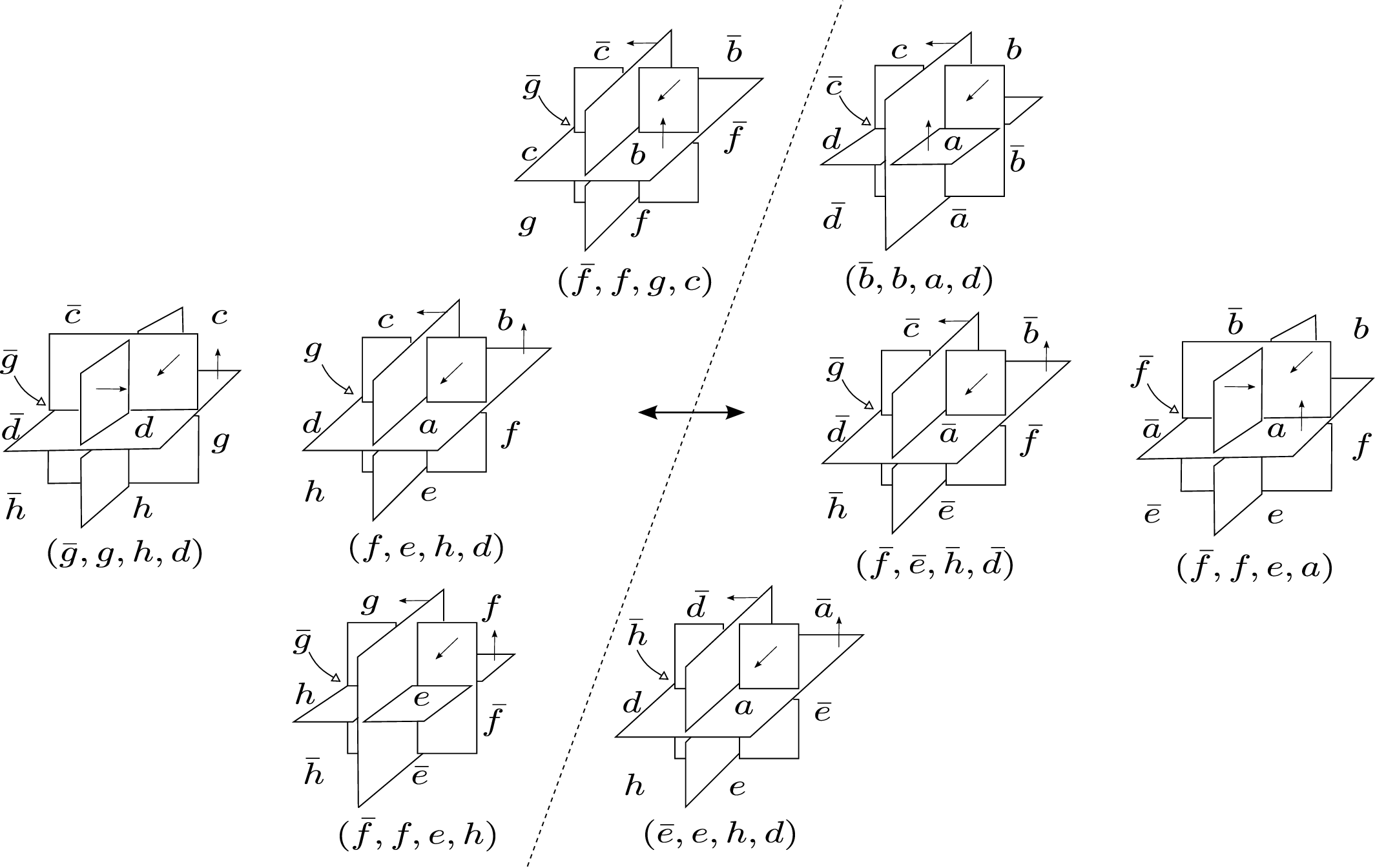}
\caption{The colored triple points involved in the seventh Roseman move, together with their colored paths.}\label{triplecrs}
\end{center}
\end{figure}

\begin{lemma}
The KTQ homology class of a cycle $c_D$ assigned to an oriented and colored broken surface diagram is an invariant under Roseman moves.
\end{lemma}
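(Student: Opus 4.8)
The plan is to follow the same strategy as the proof of invariance under Reidemeister moves, replacing the three two-dimensional Reidemeister moves by the seven Roseman moves and checking each one separately. The underlying principle is that the difference between the cycle after the move and the cycle before the move (both regarded as sums of colored ascending paths over the triple points, taken with signs) should always be either zero or a boundary $\partial_3(\cdot)$ of an explicit $4$-chain, or a degenerate chain, so that the tern homology class $H^T(X)$ is unchanged.

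First I would dispose of the moves that do not involve triple points. The Roseman moves that only create or destroy double point curves, branch points, or that push a sheet across a double point curve without producing a triple point contribute nothing to the triple-point count, or contribute only degenerate cycles of the form $\pm(x,y,x)$ (exactly as a branch point does in the argument of the previous Lemma, and as a type-I Reidemeister move does for classical knots). Since $C_n^D(X)$ is killed in tern homology, such changes do not affect the class. The move that slides one sheet over a branch point of another, and the moves creating a cancelling pair of triple points (the analogue of Reidemeister II) contribute two triple points of opposite sign whose colored ascending paths coincide, hence cancel.

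The substantive part is the tetrahedral Roseman move (the analogue of Reidemeister III), pictured in Fig. \ref{tetra1} and Fig. \ref{tetra2}: four sheets in general position meet in four triple points, and the move exchanges one such configuration for the combinatorially dual one. Here I would argue exactly as in the Reidemeister III case: label the four regions around the central configuration by $a,b,c,d$ read off along an appropriate ascending sequence, compute the colored ascending path at each of the four triple points using the coloring rule of Definition \ref{gencol} (each is obtained from the previous one by the substitution that defines $d_i^{3,L}$ or $d_i^{3,R}$), and observe that the signed sum of the four paths after the move, minus the signed sum of the four paths before the move, equals $\pm\partial_3(a,b,c,d)$ — the degree-3 differential written out in the Example above. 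The axioms A3L and A3R are precisely what makes the four triple-point colorings consistent (as noted in the remark on Fig. \ref{orb3}), so all the entries appearing in $\partial_3(a,b,c,d)$ are well defined, and A1, A2L, A2M, A2R guarantee that the degenerate terms fall into $C_2^D(X)$. One also has to treat the cases where some of the triple points are negative; as in the classical case this only changes signs and orientations of ascending paths consistently, and the net effect is still an explicit boundary.

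The main obstacle is bookkeeping in this tetrahedral move: one must orient the four sheets, fix co-orientations, identify which region is the source region for each of the four triple points and for each of the induced double-point configurations, and verify that the sign of each colored ascending path matches the sign of the corresponding term in $\partial_3$. This is the step where Fig. \ref{tetra1} and Fig. \ref{tetra2} do the real work; once the correspondence of the eight terms of $\partial_3(a,b,c,d)$ with the eight ascending paths (four before, four after) is pinned down, the identity is immediate from the definition of $\partial_3$, and the lemma follows because a boundary represents $0$ in $H^T(X)$.
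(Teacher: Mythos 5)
Your overall strategy is exactly the paper's: moves not involving triple points do not affect the sum, the fifth Roseman move contributes two triple points of opposite sign whose colored ascending paths cancel (the analogue of Reidemeister II), the sixth move creates or deletes a degenerate chain which dies in $H^T(X)$, and the tetrahedral (seventh) move is handled by exhibiting the before/after difference as an explicit boundary read off from the movie pictures (Figs.~\ref{tetra1}, \ref{tetra2}). One concrete correction is needed, though: in this complex $C_n(X)=R\langle X^{n+2}\rangle$, so $\partial_3$ acts on $5$-tuples, not $4$-tuples; the bounding chain for the tetrahedral move is $\partial_3(a,b,c,d,e)$, where $a,\dots,e$ are the colors of the \emph{five} regions along an ascending path through the four sheets involved in the move (one dimension up from the Reidemeister III case, where the bounding chain is the $4$-tuple $(a,b,c,d)\in C_2$). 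Your count of eight terms matching the four triple points before and four after is right, but as written ``$\partial_3(a,b,c,d)$'' is not defined in this theory; with the five labels the identity is precisely the $\partial_3(a,b,c,d,e)$ displayed in the paper's Example, and the rest of your bookkeeping (signs, source regions, co-orientations) proceeds as you describe. Also, minor point: moves without triple points contribute nothing at all to the cycle (it is a sum over triple points only); the degenerate terms $\pm(x,y,x)$ arise from branch points in the sixth move, not from the triple-point-free moves.
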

\begin{proof}
We only need to consider the Roseman moves that involve triple points (Fig. \ref{Ros} and Fig. \ref{Ros7}). The fifth Roseman move is similar to the second Reidemeister move in that the contributions from the two triple points with opposite signs cancel out.
The sixth Roseman move introduces (or deletes) a degenerate cycle. As for the seventh Roseman move (Fig. \ref{Ros7}), with labels and under-over information as in the proof of Theorem \ref{Rosthm}, 
the contributions from the triple points after the move, minus the contributions before the move, are equal to the boundary 
\begin{align*}
\partial_3(\bar{f},f,e,h,d)& =(f,e,h,d)-(\bar{f},\bar{f}feT,(\bar{f}feT)ehT,
[(\bar{f}feT)ehT]hdT)\\
& -(\bar{f}feT,e,h,d)+(\bar{f},f,fehT,(fehT)hdT)\\
& +(\bar{f}f(fehT)T,fehT,h,d)-(\bar{f},f,e,ehdT)\\
& -(\bar{f}f[fe(ehdT)T]T,fe(ehdT)T,ehdT,d)+(\bar{f},f,e,h)\\
&=(f,e,h,d)-(\bar{f},\bar{e},\bar{h},\bar{d})-(\bar{e},e,h,d)+(\bar{f},f,g,c)\\
&+(\bar{g},g,h,d)-(\bar{f},f,e,a)-(\bar{b},b,a,d)+(\bar{f},f,e,h).
\end{align*} 
The triple points in this configuration, together with the colors, co-orientation, under-over information, and colored paths, are depicted in Fig. \ref{triplecrs} (the left and right sides are switched compared to Fig. \ref{Ros7}). All the triple points are positive, thus the colored paths are taken with the positive signs.
\end{proof}

We define KTQ cohomology groups in a standard dual way, so the cocycles used for knot diagrams are functions 
$f\colon X\times X\times X\to A$, where $A$ is an abelian group and $(X,T)$ is a KTQ, satisfying
two conditions:
\begin{equation*}
\forall_{a,b \in X} \quad f(a,b,abb\R)=0,
\end{equation*}
\begin{align*}
\forall_{a,b,c,d \in X} \quad 
& f(b,c,d)-f(a,abcT,(abcT)cdT)\\
&-f(abcT,c,d)+f(a,b,bcdT)\\
&+f(ab(bcdT)T,bcdT,d)-f(a,b,c)=0.
\end{align*}
Cocycles for knotted surface diagrams (in $\mathbb{R}^3$) are 
functions 
$\phi\colon X\times X\times X\times X\to A$, satisfying:
\begin{equation*}
\forall_{a,b,c \in X} \quad \phi(a,b,abb\R,c)=\phi(c,a,b,abb\R)=0,
\end{equation*}
\begin{align*}
\forall_{a,b,c,d,e \in X} \quad
&\phi(b,c,d,e)-\phi(a,abcT,(abcT)cdT,[(abcT)cdT]deT)\\
&-\phi(abcT,c,d,e)+\phi(a,b,bcdT,(bcdT)deT)\\
&+\phi(ab(bcdT)T,bcdT,d,e)-\phi(a,b,c,cdeT)\\
&-\phi(ab[bc(cdeT)T]T,bc(cdeT)T,cdeT,e)+\phi(a,b,c,d)=0.
\end{align*}

In an analogous way to the construction in \cite{CJKLS03}, we define cocycle invariants.

\begin{definition}
Let $(X,T)$ be a KTQ. Let $f$ denote a cocycle from the KTQ cohomology of $(X,T)$, with inputs from $C_1(X)$ for link and Yoshikawa diagrams, and from $C_2(X)$ for knotted surface diagrams. $f$ takes values in an abelian group $A$ written multiplicatively.
Let $\mathcal{C}$ denote an $(X,T)$-coloring of a link or Yoshikawa diagram $D$ on a compact oriented surface $F$ (or on a plane), or a coloring of a knotted surface diagram $D$ in $\mathbb{R}^3$. For such $\mathcal{C}$, and a classical crossing or a triple point $\tau$, let $cp(\tau,\mathcal{C})$ denote the colored path of $\tau$, and let $c_{D,\mathcal{C}}$ be the cycle assigned to the colored diagram $D$.
Then we define the {\it cocycle invariant} as the state-sum expression
\[ 
\Phi(D,f)=\sum_{\mathcal{C}}\prod_{\tau}f(cp(\tau,\mathcal{C}))^{\epsilon(\tau)}=
\sum_{\mathcal{C}}f(c_{D,\mathcal{C}}),
\]
where the product is taken over all classical crossings or triple points of $D$, the sum is taken over all $(X,T)$-colorings of $D$, and $\epsilon(\tau)$ denotes the sign of $\tau$. The value of $\Phi(D,f)$ is in the group ring $\Z[A]$.
\end{definition}

\begin{lemma}
If $D$ denotes a link diagram (on a compact oriented surface $F$, or on a plane), then $\Phi(D,f)$ is not changed by Reidemeister moves. If $D$ is a Yoshikawa diagram
(on a compact oriented surface $F$, or on a plane), then $\Phi(D,f)$ is an invariant under Yoshikawa moves. If $D$ is a knotted surface diagram in $\mathbb{R}^3$, then 
$\Phi(D,f)$ is not changed by Roseman moves. Additionally, in all these cases, cohomologous cocycles give the same $\Phi(D,f)$.
\end{lemma}
\begin{proof}
The coboundary $\delta$ is defined via $(\delta f)(c)=f(\partial c)$. If $c_{D,\mathcal{C}}$ is a cycle assigned to a colored diagram before the Reidemeister, Yoshikawa, or Roseman move, and $c'_{D',\mathcal{C'}}$ is for the diagram after the move, then we have already proved that $c_{D,\mathcal{C}}$ and $c'_{D',\mathcal{C'}}$ are homologous. Thus, $c_{D,\mathcal{C}} - c'_{D',\mathcal{C'}}=\partial d$, for some $d$, and
\[
f(c_{D,\mathcal{C}})f(c'_{D',\mathcal{C'}})^{-1}=f(\partial d)=(\delta f)(d)=1.
\]
It follows that
\[
\Phi(D,f)=\sum_{\mathcal{C}}f(c_{D,\mathcal{C}})=\sum_{\mathcal{C'}}f(c_{D',\mathcal{C'}})=\Phi(D',f).
\]
If $f'f^{-1}=\delta g$, then
\[
f'(c_{D,\mathcal{C}})f(c_{D,\mathcal{C}})^{-1}=(\delta g)(c_{D,\mathcal{C}})=
g(\partial c_{D,\mathcal{C}})=g(0)=1.
\]
Therefore, $\Phi(D,f')=\Phi(D,f)$.
\end{proof}

\begin{example} \label{2el}
There are two two-element KTQs, $KTQ_{2,1}=(\{0,1\},T_1)$ and $KTQ_{2,2}=(\{0,1\},T_2)$, where 
\begin{align*}
& xyzT_1=x+y+z \pmod 2,\\
& xyzT_2=x+y+z+1 \pmod 2.
\end{align*}
Both are examples of knot-theoretic ternary groups (see \cite{NPZ18} for an algebraic analysis of such ternary groups, with an application to flat links; see also \cite{Nie14} for more general examples). Note that due to working modulo 2, for both KTQs the divisions $\eL$, $\M$, and $\R$ are the same as the primary operations.
We will now calculate $H_N^1(KTQ_{2,2};\mathbb{Z})$. Let $\chi$ denote the characteristic function
\begin{displaymath}
\chi_x(y)=\left\{\begin{array}{ll}
1&\textrm{if $x=y$}\\
0&\textrm{if $x\neq y$}.
\end{array} \right.
\end{displaymath} 
Then each n-cochain $f$ can be written as 
$f=\sum_{x\in X^{n+2}}C_x\chi_x$ for some coefficients $C_x$, where the sum ranges over non-degenerate $(n+2)$-tuples. Recall that the degenerate tuples are the ones containing $a$, $b$, $abb\R$ on three consecutive coordinates, for some $a$ and $b$, which, in case of $KTQ_{2,2}$, translates to 
\[
a,\ b,\ a+b+b+1=a+1.
\]
To find conditions for the coefficients $C_x$ in the case when $f$ is a 1-cocycle, we first look at the values of $f$ on boundaries of non-degenerate 4-tuples (degenerate triples are removed in calculations):
\begin{align*}
&f(\partial(0,0,0,0))=f(-(0,1,0)-(0,0,0)+(0,0,0)+(0,1,0))=0,\\
&f(\partial(1,1,1,1))=f(-(1,0,1)-(1,1,1)+(1,1,1)+(1,0,1))=0,\\
&f(\partial(0,1,0,1))=f(-(0,0,0)-(0,1,0)+(1,0,1)+(1,1,1))=0,\\
&f(\partial(1,0,1,0))=f(-(1,1,1)-(1,0,1)+(0,1,0)+(0,0,0))=0.\\
\end{align*}
It follows that 
\[
C_{(0,1,0)}+C_{(0,0,0)}-C_{(1,1,1)}-C_{(1,0,1)}=0,
\]
and if we set $x=C_{(0,0,0)}$, $y=C_{(0,1,0)}$, and $z=C_{(1,0,1)}$, then $f$ can be written as 
\begin{align*}
f&=x\chi_{(0,0,0)}+y\chi_{(0,1,0)}+z\chi_{(1,0,1)}+(x+y-z)\chi_{(1,1,1)}\\
&=x(\chi_{(0,0,0)}+\chi_{(1,1,1)})+y(\chi_{(0,1,0)}+\chi_{(1,1,1)})
+z(\chi_{(1,0,1)}-\chi_{(1,1,1)}).
\end{align*}
Coboundaries of the characteristic functions in lower dimension are as follows.
\begin{align*}
&\delta\chi_{(0,0)}=2\chi_{(0,0,0)}-2\chi_{(0,1,0)},\\
&\delta\chi_{(0,1)}=\chi_{(0,1,0)}+\chi_{(1,0,1)}-\chi_{(0,0,0)}-\chi_{(1,1,1)},\\
&\delta\chi_{(1,0)}=\chi_{(0,1,0)}+\chi_{(1,0,1)}-\chi_{(0,0,0)}-\chi_{(1,1,1)},\\
&\delta\chi_{(1,1)}=2\chi_{(1,1,1)}-2\chi_{(1,0,1)}.
\end{align*}
In $H_N^1(KTQ_{2,2};\mathbb{Z})$, we can write
\begin{align*}
f&=x(\chi_{(0,1,0)}+\chi_{(1,0,1)})+y(\chi_{(0,1,0)}+\chi_{(1,1,1)})
+z(\chi_{(1,0,1)}-\chi_{(1,1,1)})\\
&=(x+y)\chi_{(0,1,0)}+(x+z)\chi_{(1,0,1)}+(y-z)\chi_{(1,1,1)}\\
&=\alpha\chi_{(0,1,0)}+\beta\chi_{(1,0,1)}+(\alpha-\beta)\chi_{(1,1,1)}\\
&=\alpha(\chi_{(0,1,0)}+\chi_{(1,1,1)})-\beta(\chi_{(1,1,1)}-\chi_{(1,0,1)}).
\end{align*}
It follows that $H_N^1(KTQ_{2,2};\mathbb{Z})=\mathbb{Z}\times\mathbb{Z}_2$.
\end{example}

\subsection{KTQ cocycle invariants and cocycle invariants from the third quandle cohomology}
Knot-theoretic ternary quasigroups and quandles are universal-algebraic structures with a common root: the fundamental group of the knot complement. Quandle colorings in their basic version are colorings of arcs of a knot diagram, but they can be extended to colorings of regions, although in such case a color assigned to a given region determines the colors of all the other regions \cite{FR92,Kam02}.
Such extended colorings are called shadow colorings, and give cycles in the third quandle homology \cite{CKS01,KnSurf}. The following notion is important in our considerations.

\begin{figure}
\begin{center}
\includegraphics[height=2.8 cm]{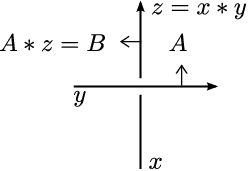}
\caption{The (shadow) quandle coloring rule.}\label{qcol}
\end{center}
\end{figure}

\begin{definition}[\cite{CKS01}]
Let $D$ be a knot diagram on a compact oriented surface $F$. Then the
fundamental shadow quandle $SQ(D)$ is defined using presentation as follows. The generators correspond to over-arcs and regions of $D$. The relations are defined for each
crossing as in ordinary fundamental quandles (Fig. \ref{qcol}), and at each arc dividing regions. Specifically, if $A$ and $B$ are generators corresponding to adjacent regions such that the normal points from
the region colored $A$ to that colored $B$, and if the arc dividing these regions is colored by $z$,
then $B = A*z$. 
\end{definition}
Two diagrams on $F$ that differ by Reidemeister moves on $F$ have isomorphic fundamental shadow quandles. The shadow colorings using a quandle $Q$ can be  regarded as quandle homomorphisms from the fundamental shadow quandle to $Q$ (see \cite{CKS01}). 

We note that it is possible to find examples of links on surfaces for which
$SQ(D)$ collapses (basically because of the first quandle axiom $a*a=a$, for all $a$), and $KTQ(D)$ and cohomological invariants retain useful information. Here is one such example.

\begin{figure}
\begin{center}
\includegraphics[height=4.5 cm]{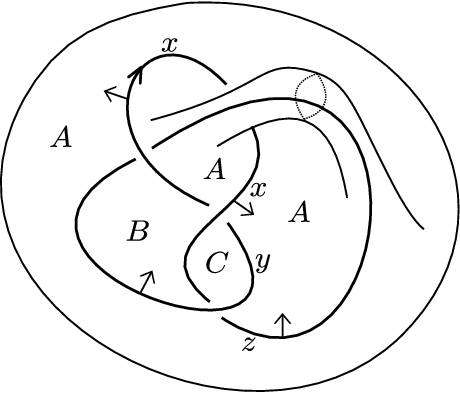}
\caption{A knot diagram on a torus with symbols for generators of $SQ(D)$ and $KTQ(D)$.}\label{8onT}
\end{center}
\end{figure}

\begin{example}
Consider the oriented knot diagram $D$ on a torus depicted in Fig. \ref{8onT}.
The symbols of the generators of $SQ(D)$ assigned to arcs are $x$, $y$, $z$, and the symbols of generators assigned to regions are $A$, $B$, and $C$. The quandle relation $x*x=y$ implies $x=y$, and 
$z*y=z*x=x$ is equivalent to $z=x\bar{*}x=x$, where $\bar{*}$ denotes the inverse quandle operation. Thus, $x=y=z$, and it follows that for any coloring with any quandle $Q$, the associated cycle in the third quandle homology (and the cocycle invariants) will be trivial. Indeed, the chain assigned to any colored crossing will have the colors of the under-arc and the over-arc on two consecutive coordinates, and since they are the same, it will be a degenerate chain. We can finish calculating 
$SQ(D)$: $A=A*x=B$, and $C=A*y=A*x=A$. Therefore, $SQ(D)$ can be presented as having two generators: $x$ and $A$, and one relation $A*x=A$. Such $SQ(D)$ is the same as $SQ$ for the meridian loop. $KTQ(D)$ has generators
$A$, $B$, $C$, and relations $A=AABT$, $A=ABCT$, and $B=AACT$. The signed colored paths are: $(A,A,B)$, $-(A,B,C)$, and $-(A,A,C)$. Let us use $KTQ_{2,2}$ with the cocycle $\phi=\chi_{(0,1,0)}+\chi_{(1,1,1)}$. There are two colorings of $D$ with 
$KTQ_{2,2}$. The first is given by the assignment $A\mapsto 0$, $B\mapsto 1$,
and $C\mapsto 0$, and the second by $A\mapsto 1$, $B\mapsto 0$,
and $C\mapsto 1$. Taking into consideration that for both colorings $(A,A,B)$ yields a degenerate chain, the cycles for the colored diagrams are
\[
c_1=-(0,1,0)-(0,0,0),\ \textrm{and}\ \, c_2=-(1,0,1)-(1,1,1).
\]
Therefore, $\Phi(D,\phi)=-t^2$, where $t$ denotes the generator of $\mathbb{Z}$ in the multiplicative notation.
This nontrivial value of the cocycle invariant distinguishes $D$ from some other knots on the torus, including the ones with crossingless diagrams. Note that
because the divisions in $KTQ_{2,2}$ are the same as the primary operation,
there is no difference between coloring of a crossing and the coloring of its mirror image. The distinction between crossings is made at the level of colored paths. An example that detects the presence of crossings in $D$ on the level of colorings is a three-element KTQ $(\{1,2,3\},T_3)$ that we describe with the primary operation multiplication cube. The multiplication cube can be sliced into the following three matrices, each matrix for a fixed first coordinate of $xyzT_3$.
\[ 
\begin{array}{|c| c ccc} 
1yzT_3&1 & 2 & 3 \\
\hline 
1 & 2 & 3 & 1 \\
2 & 3 & 1 & 2 \\
3 & 1 & 2 & 3 \\ 
\end{array}
\begin{array}{|c| c ccc} 
2yzT_3&1 & 2 & 3 \\
\hline 
1 & 1 & 2 & 3 \\
2 & 2 & 3 & 1 \\
3 & 3 & 1 & 2 \\ 
\end{array}
\begin{array}{|c| c ccc} 
3yzT_3&1 & 2 & 3 \\
\hline 
1 & 3 & 1 & 2 \\
2 & 1 & 2 & 3 \\
3 & 2 & 3 & 1 \\ 
\end{array}
\]
For example $123T_3=2$ and $231T_3=3$. 
There are no colorings of $D$ using this KTQ, and that distinguishes it from a diagram with no crossings, for which the number of colorings is $3^{|Reg|}$.
GAP calculations show that $H^N_0(X,T_3)=\mathbb{Z}$, $H^N_1(X,T_3)=\mathbb{Z}_3$,
and $H^N_2(X,T_3)=\mathbb{Z}_3^2$.
\end{example}

\subsection{Some computational examples}

Here we include more examples of calculations, with the help of GAP \cite{GAP4}. 

\begin{example}
Consider the following four-element KTQ.
\[ 
\begin{array}{|c| c cccc} 
1yzT&1 & 2 & 3 & 4 \\
\hline 
1 & 1 & 2 & 3 & 4 \\
2 & 2 & 1 & 4 & 3 \\
3 & 3 & 4 & 2 & 1 \\ 
4 & 4 & 3 & 1 & 2 \\
\end{array}
\begin{array}{|c| c cccc} 
2yzT&1 & 2 & 3 & 4 \\
\hline 
1 & 2 & 1 & 4 & 3 \\
2 & 1 & 2 & 3 & 4 \\
3 & 4 & 3 & 1 & 2 \\ 
4 & 3 & 4 & 2 & 1 \\
\end{array}
\begin{array}{|c| c cccc} 
3yzT&1 & 2 & 3 & 4 \\
\hline 
1 & 4 & 3 & 2 & 1 \\
2 & 3 & 4 & 1 & 2 \\
3 & 1 & 2 & 3 & 4 \\ 
4 & 2 & 1 & 4 & 3 \\
\end{array}
\begin{array}{|c| c cccc} 
4yzT&1 & 2 & 3 & 4 \\
\hline 
1 & 3 & 4 & 1 & 2 \\
2 & 4 & 3 & 2 & 1 \\
3 & 2 & 1 & 4 & 3 \\ 
4 & 1 & 2 & 3 & 4 \\
\end{array}
\]
This KTQ is not of the type $(X,T,\eL=T^{(2,3)},M=\hat{T},\R=T^{(1,2)})$, and its primary operation $T$ is not obtained by composing two binary quasigroup operations.
We calculated that $H^N_0(X,T)=\mathbb{Z}^2$, 
$H^N_1(X,T)=\mathbb{Z}^2\oplus \mathbb{Z}_4$,
and $H^N_2(X,T)=\mathbb{Z}^2\oplus \mathbb{Z}_2^2\oplus \mathbb{Z}_4^2$.

Let us consider a 1-cocycle with $\mathbb{Z}_2$ coefficients expressed as a sum of characteristic functions:
\begin{align*}
f&=\chi_{(1,1,4)}+\chi_{(1,2,2)}+\chi_{(1,2,3)}+\chi_{(1,4,2)}+\chi_{(2,1,1)}
+\chi_{(2,1,3)}+\chi_{(2,2,4)}+\chi_{(2,3,1)}\\
&+\chi_{(3,1,1)}+\chi_{(3,1,2)}+\chi_{(3,1,3)}+\chi_{(3,3,4)}
+\chi_{(4,2,1)}+\chi_{(4,2,2)}+\chi_{(4,2,4)}+\chi_{(4,3,1)}\\
&+\chi_{(4,3,2)}+\chi_{(4,4,1)}+\chi_{(4,4,2)}+\chi_{(4,4,3)}.
\end{align*}
Let $L_1$ denote the Hopf link (with the braid word $\sigma_1\sigma_1$),
$L_2$ be the Whitehead link ($\sigma_1\sigma_2^{-1}\sigma_1\sigma_2^{-1}\sigma_2^{-1}$), and let
$L_3$ denote the Borromean rings ($\sigma_1^{-1}\sigma_2\sigma_1^{-1}\sigma_2\sigma_1^{-1}\sigma_2$), with a generator $\sigma_i$ corresponding to a positive crossing.
Then the values of the cocycle invariant are $\Phi(L_1,f)=16+16t$, $\Phi(L_2,f)=32+32t$, and 
$\Phi(L_3,f)=64+192t$, where $t$ denotes the multiplicative generator of $\mathbb{Z}_2$.
\end{example}

\begin{example}
Using the KTQ $(\{1,2,3\},T_3)$ defined earlier, we note that
\[
g=\chi_{(1,3,1)}+\chi_{(1,3,2)}+\chi_{(2,1,2)}-\chi_{(2,2,3)}-\chi_{(2,3,3)}
+\chi_{(3,1,3)}+\chi_{(3,3,1)}
\]
is a 1-cocycle with $\mathbb{Z}_3$ coefficients. 
The value of the cocycle invariant for the square knot is $9+36t+36t^2$, and for the granny knot it is $45+18t+18t^2$. The value for the trefoil knot ($\sigma_1\sigma_1\sigma_1$) is $9+18t$, while for its mirror image it is
$9+18t^2$.
\end{example}

\begin{figure}
\begin{center}
\includegraphics[width=14 cm]{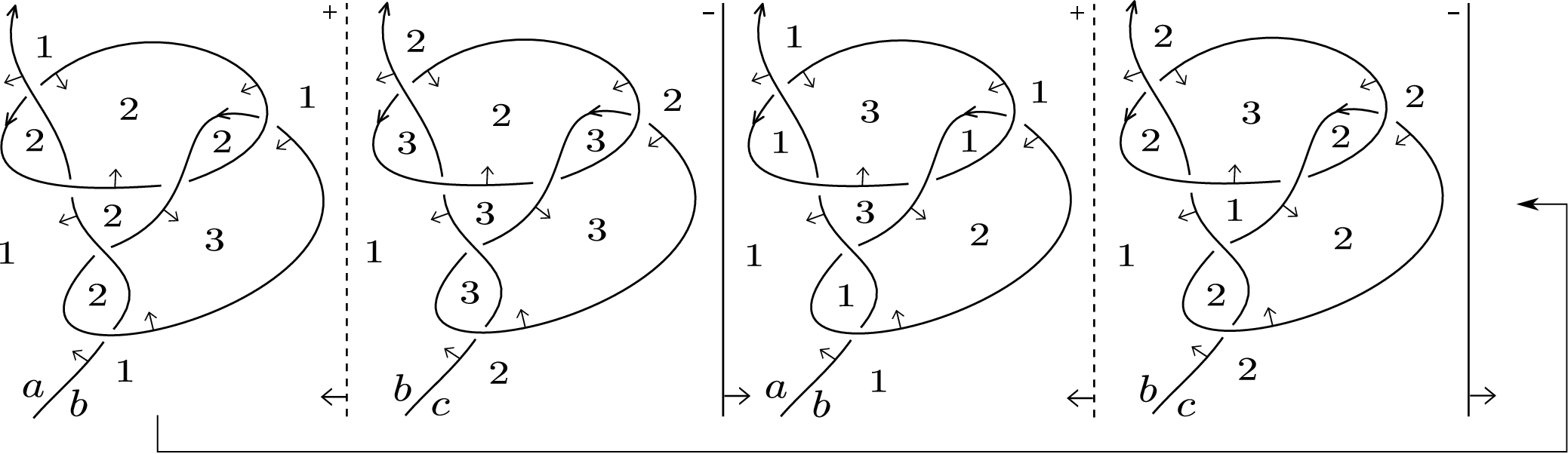}
\caption{A concise description of a KTQ coloring of a surface-link obtained by twist-spinning.}\label{L6a5dt}
\end{center}
\end{figure}

\begin{example} An important class of knotted surfaces are surfaces obtained by twist-spinning. An $r$-twist-spin of a knot $K$, denoted by $\tau^rK$, is a knotted sphere constructed by Zeeman in \cite{Zee65}. Similarly, one can obtain surface-links by twist-spinning classical links. A useful knotted surface diagram for $\tau^rK$ was described in \cite{AS05}, and our computations will be based on it. Because in KTQ colorings one colors only the regions in the complement of the diagram, it is rather easy to present the information required for the calculation of the cocycle invariants in a two-dimensional way. We will show it on an example of a $2$-twist-spin of a link L6a5 (in Thistlethwaite's notation). The resulting surface-link consists of a sphere and two tori.
In Fig. \ref{L6a5dt} each frame contains a KTQ-colored tangle $\mathcal{T}$, whose closure is the link L6a5. Referring to the description in \cite{AS05} (Section 3), the first and the third pictures correspond to $\mathcal{T}$ before it enters the first and the second spherical kink, respectively. The second and the fourth pictures of $\mathcal{T}$ show how its colorings look like inside the first and the second kink, respectively. After exiting the second kink, $\mathcal{T}$ has to have a coloring that is the same as the one in the first picture. There are $r$ kinks in the diagram of $\tau^rK$, so in general our coloring description would have $2r$ colored knot (or link) diagrams.
The separating lines represent the sheets belonging to the kinks. They are dashed if the kink is the bottom sheet for the triple points, and solid if it is the top sheet; the co-orientation is indicated by the arrows. For dashed lines, the signs of the triple points agree with the signs of the corresponding crossings in the preceding oriented tangle diagram, for solid lines they are opposite. For a coloring of such a surface-link to be valid, the colors $a$, $b$, and $c$ must repeat themselves in a pattern indicated in Fig. \ref{L6a5dt}, where $c$ is the color inside the kinks, $b$ is the color of the outside region, and $a$ is the color inside the sphere (near the axis of rotation). Additionally, two consecutive copies of each semi-arc of $\mathcal{T}$ (separated by a vertical line) correspond to a double point edge with its required relation. Fig. \ref{L6a5dt} shows a coloring with $(\{1,2,3\},T_3)$. Thus, for example, the relation for the pair of the top semi-arcs separated by the left-most dashed line is $211T_3=1$. 
The cycle assigned to the coloring from Fig. \ref{L6a5dt} before removing degenerate 4-tuples is:
\begin{align*}
&+(2,1,2,2)+(3,2,1,2)+(3,2,2,2)+(2,1,3,2)+ 
(3, 2, 3, 2)+(2, 1, 1, 2)\\
&-(2, 2, 3, 1)-(3, 1, 3, 1)-(3, 2, 3, 1)-(2, 3, 3, 1)-
(3, 3, 3, 1)-(2, 1, 3, 1)\\
&+(2, 1, 3, 1)+(1, 3, 1, 1)+(1, 3, 3, 1)+(2, 1, 2, 1)+(1, 3, 2, 1)+(2, 1, 1, 1)\\
&-(2, 3, 2, 2)-(1, 1, 2, 2)- 
(1, 3, 2, 2)-(2, 2, 2, 2)-(1, 2, 2, 2)-(2, 1, 2, 2). 
\end{align*}
After removing degenerate 4-tuples it becomes:
\begin{align*}
&+(3, 2, 1, 2)+(2, 1, 3, 2)+(3, 2, 3, 2)+(1, 3, 1, 1)+(2, 1, 2, 1)+(1, 3, 2, 1)\\
&-(3, 1, 3, 1)-(2, 3, 3, 1)-(3, 3, 3, 1)-(1, 1, 2, 2)-(2, 2, 2, 2)-(1, 2, 2, 2).
\end{align*}
The following is a $2$-cocycle with $\mathbb{Z}_3$ coefficients: 
\begin{align*}
\phi=&-\chi_{( 1, 1, 2, 2 )}-\chi_{(1, 2, 2, 3)}+\chi_{(1, 3, 1, 1 )}+
\chi_{(1, 3, 1, 3)}-\chi_{( 1, 3, 2, 3 )}-\chi_{( 2, 1, 2, 1)}+
\chi_{(2, 1, 3, 1)}\\
&-\chi_{(2, 1, 3, 2)}+\chi_{(2, 2, 2, 2 )}+\chi_{(2, 2, 2, 3 )}-\chi_{(2, 2, 3, 2 )}+\chi_{(2, 2, 3, 3)}+\chi_{(2, 3, 2, 3 )}+
\chi_{(2, 3, 3, 1 )}\\
&-\chi_{(2, 3, 3, 3)}+\chi_{(3, 1, 1, 1)}+\chi_{( 3, 1, 1, 2 )}-\chi_{(3, 1, 3, 1)}-\chi_{(3, 1, 3, 2)}-\chi_{(3, 2, 1, 3)}+
\chi_{(3, 2, 3, 3)}\\
&+\chi_{(3, 3, 1, 1)}+\chi_{(3, 3, 1, 3)}+\chi_{(3, 3, 3, 1)}-\chi_{(3, 3, 3, 3)}.
\end{align*}
Its evaluation on the above coloring cycle is 1 (or $t$, if we use the multiplicative notation). Taking into consideration all the colorings, the value of the cocycle invariant given by $\phi$ for this surface-link is $9+18t$.
If we change the orientation for all the components, the value of the invariant becomes $9+18t^2$. Thus, it is an example of a non-invertible surface-link.
\end{example}

\bibliography{simplex}
\bibliographystyle{plain}
\end{document}